\newtheorem{theorem}{Theorem}
\newtheorem{case}{Case}
\newtheorem{lemma}{Lemma}
\newtheorem{corollary}{Corollary}
\theoremstyle{definition}
\newtheorem{claim}{Claim}[section]
\begin{document}

\title{Trichotomy and $tK_m$-goodness of sparse graphs}
\author[1,3]{Yanbo Zhang}
\affil[1]{School of Mathematical Sciences\\ Hebei Normal University\\ Shijiazhuang 050024, China}
\author[2]{Yaojun Chen}
\affil[2]{School of Mathematics\\ Nanjing University\\ Nanjing 210093, China}
\affil[3]{Hebei Research Center of the Basic Discipline Pure Mathematics\\ Shijiazhuang 050024, China}

\date{}
\maketitle
\let\thefootnote\relax\footnotetext{\emph{Email addresses:} {\tt ybzhang@hebtu.edu.cn} (Y. Zhang), {\tt yaojunc@nju.edu.cn} (Y. Chen)}
\begin{quote}
{\bf Abstract:} 
Let $G$ be a connected graph with $n$ vertices and $n+k-2$ edges and $tK_m$ denote the disjoint union of $t$ complete graphs $K_m$. In this paper, by developing a trichotomy  for sparse graphs, we show that for given integers $m\ge 2$ and $t\ge 1$, there exists a positive constant $c$ such that if $1\le k\le cn^{\frac{2}{m-1}}$ and $n$ is large, then $G$ is $tK_m$-good, that is, the Ramsey number is
\[
r(G, tK_m)=(n-1)(m-1)+t\,.
\]
In particular, the above equality holds for any positive integers $k$, $m$, and $t$, provided $n$ is large. The  case $t=1$ was obtained by Burr, Erd\H{o}s, Faudree, Rousseau, and Schelp (1980), and the case $k=1$ was established by  Luo and Peng (2023).

{\bf Keywords:} Trichotomy method, Sparse graphs, Ramsey number

{\bf AMS Subject Classification:} 05C55, 05D10
\end{quote}

\section{Introduction}
Given a positive integer $n$ and a graph $G$, how can we ensure that $G$ contains all trees with $n$ vertices? Except for the well-known folklore result that large minimum degree of $G$ can guarantee this, a very useful tool for doing this is ``Tree Trichotomy"  developed by Burr, Erd\H{o}s, Faudree, Rousseau, and Schelp~\cite{Burr1982}, who showed that any tree must contain a long suspended path, i.e., a path in which all internal vertices have degree $2$, or a large matching consisting of end-edges, or a large star consisting of end-edges, where an end-edge is one incident with a vertex of degree 1. To be precise, they show the following result, referred to as ``Tree Trichotomy'' by Faudree, Rousseau, and Schelp~\cite{Faudree1990}.

\begin{lemma}[Burr and Faudree~\cite{Burr1993}]
	If $n\ge 4\alpha \beta \gamma$, then every tree with $n$ vertices contains a suspended path with $\alpha$ vertices, or a matching consisting of $\beta$ end-edges, or a star consisting of $\gamma$ end-edges.
\end{lemma}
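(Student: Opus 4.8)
The plan is to argue by contraposition: assuming a tree $T$ on $n$ vertices contains none of the three prescribed structures, I would show $n<4\alpha\beta\gamma$. Write $L$ for the set of leaves (degree-$1$ vertices), $S$ for the set of \emph{support vertices} (those adjacent to at least one leaf), and let $n_{\ge 3}$ denote the number of \emph{branch vertices} (degree $\ge 3$). Under the failure hypotheses the three structures are forbidden, which reads: every suspended path has at most $\alpha-1$ vertices; every matching of end-edges has at most $\beta-1$ edges; and every support vertex is adjacent to at most $\gamma-1$ leaves.

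First I would bound the number of leaves. Selecting, for each support vertex, one incident end-edge produces a matching of end-edges — distinct support vertices have distinct leaf-neighbours, and for $n\ge 3$ a support vertex has degree $\ge 2$, so these edges are pairwise disjoint — and hence $|S|\le \beta-1$. Since each leaf is attached to exactly one support vertex and each support vertex carries at most $\gamma-1$ leaves, this gives
\[
|L| \;=\; \sum_{v\in S}\bigl(\text{number of leaf-neighbours of }v\bigr)\;\le\;(\beta-1)(\gamma-1).
\]

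Next I would control the internal structure by decomposing $T$ into suspended \emph{segments}: the maximal paths whose internal vertices all have degree $2$, equivalently the edges of the homeomorphic reduction of $T$ obtained by suppressing degree-$2$ vertices. The reduced tree has vertex set $L\cup\{\text{branch vertices}\}$, so the number of segments equals $|L|+n_{\ge 3}-1$. The degree-sum identity $\sum_v(\deg v-2)=-2$ for a tree forces $\sum_{\deg v\ge 3}(\deg v-2)=|L|-2$, whence $n_{\ge 3}\le |L|-2$. Each segment is a suspended path and so carries at most $\alpha-1$ vertices; since every vertex of $T$ lies on at least one segment, summing the vertex-counts over all segments overcounts $n$, yielding
\[
n \;\le\; \bigl(|L|+n_{\ge 3}-1\bigr)(\alpha-1)\;\le\;2|L|\,(\alpha-1)\;<\;2\alpha(\beta-1)(\gamma-1)\;<\;4\alpha\beta\gamma,
\]
the desired contradiction (the path case, where there are no branch vertices, is immediate, since then $T$ itself is a single suspended path and already has fewer than $\alpha$ vertices).

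I expect the only genuinely delicate point to be the segment decomposition together with its counting: one must verify that a suspended path cannot cross a branch vertex (so each such path lies inside a single segment, and the ``at most $\alpha-1$ vertices'' bound really applies to every segment), and that the homeomorphic reduction accounts for all vertices of $T$ — each exactly once among leaves, branch vertices, and segment-interiors. Everything else reduces to the two elementary tree facts used above, namely that the support vertices induce a matching of end-edges and that the number of branch vertices is controlled by the number of leaves via the degree-sum identity.
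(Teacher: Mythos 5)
Your proof is correct. The paper does not prove Lemma~1 itself (it is quoted from Burr and Faudree), but your argument --- bounding the support vertices via the matching hypothesis, the leaves via the star hypothesis, the branch vertices via the degree-sum identity, and then decomposing the tree into suspended segments each with at most $\alpha-1$ vertices --- is precisely the strategy the paper uses to prove its sparse-graph generalization, Lemma~\ref{Gpsize}, where your homeomorphic-reduction step appears as the contraction of $G'$ to the multigraph $G''$.
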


This method was subsequently used multiple times~\cite{Burr1987, Burr1993, Erdos1989, Erdos1982, Erdos1985, Erdos1988, FaudreeSS1990} between 1982 and 1993. The core idea of this method is to use the Tree Trichotomy Lemma to divide the problem into three cases. In each case, we use induction to identify a smaller tree $T'$. Additionally, in the third case, a greedy algorithm is employed. The structure of $T'$ depends on the specific case: it is obtained by shortening a suspended path (first case), deleting some end-edges forming a matching (second case), or deleting some end-edges forming a star (third case).

Next, other tools are utilized to extend $T'$ into the desired $n$-vertex tree $T_n$. For the second case, Hall's theorem is applied to ensure the embedding of a matching. In the third case, it is necessary to first identify a star $K_{1,n-1}$. The first case relies on the following path extension lemma established by Erd\H{o}s, Faudree, Rousseau, and Schelp~\cite{Erdos1985}, which is frequently used in constructing long paths in sparse graphs.

\begin{lemma}[Erd\H{o}s, Faudree, Rousseau, and Schelp~\cite{Erdos1985}]\label{pathextension}
	Let $K_{a+b}$ be a complete graph on the vertex set $\{x_1, \dots, x_a, y_1, \dots, y_b\}$, with edges colored red or blue. Assume that there is a red path $x_1x_2 \cdots x_a$ of length $a-1$ joining $x_1$ and $x_a$. If $a\ge b(c-1)+d$, then at least one of the following holds:
	\begin{enumerate}[(1)]
		\item A red path of length $a$ connects $x_1$ to $x_a$.
		\item A blue complete subgraph $K_c$ exists.
		\item There are $d$ vertices in $\{x_1, x_2, \dots, x_a\}$ that are joined in blue to every vertex in $\{y_1, y_2, \dots, y_b\}$.
	\end{enumerate}	
\end{lemma}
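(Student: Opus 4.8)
The plan is to argue by contraposition: assume outcome~(1) fails and outcome~(3) fails, and deduce the blue clique of outcome~(2). The first observation is that if outcome~(1) fails then no single vertex $y_j$ can be inserted into the red path, since inserting $y_j$ between two consecutive path vertices $x_i,x_{i+1}$ that are both red-joined to $y_j$ would produce the red path $x_1\cdots x_i\,y_j\,x_{i+1}\cdots x_a$ of length $a$ from $x_1$ to $x_a$. Hence for every $j$ the red neighbours of $y_j$ among $x_1,\dots,x_a$ contain no two consecutive indices; in particular, if $x_i$ is red to $y_j$ then its successor $x_{i+1}$ is blue to $y_j$. Next I would count: since outcome~(3) fails, fewer than $d$ of the $x_i$ are blue-complete to $\{y_1,\dots,y_b\}$, so at least $a-d+1\ge b(c-1)+1$ of them send a red edge to $Y$. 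Discarding at most the endpoint $x_a$, at least $b(c-1)$ such vertices lie in $\{x_1,\dots,x_{a-1}\}$, and assigning to each one a red neighbour in $Y$, the pigeonhole principle yields a single vertex $y^{*}$ with at least $c-1$ red neighbours $x_{i_1},\dots,x_{i_{c-1}}$ in $\{x_1,\dots,x_{a-1}\}$.

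The heart of the argument is a rerouting (P\'osa-type) construction showing that the successors $x_{i_1+1},\dots,x_{i_{c-1}+1}$ are pairwise blue. Concretely, for two red neighbours $x_p,x_q$ of $y^{*}$ with $p<q\le a-1$, if the chord $x_{p+1}x_{q+1}$ were red then I would form the red path that runs $x_1\cdots x_p$, then jumps $x_p\,y^{*}\,x_q$, then traverses the reversed segment $x_q x_{q-1}\cdots x_{p+1}$, then uses the chord $x_{p+1}x_{q+1}$, and finally runs $x_{q+1}\cdots x_a$; this visits every $x_i$ exactly once together with $y^{*}$, giving a red path of length $a$ from $x_1$ to $x_a$ and contradicting the failure of outcome~(1). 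Therefore each such chord is blue, so the $c-1$ successors form a blue clique. Combining this with the first paragraph, in which each successor $x_{i_s+1}$ is blue to $y^{*}$ precisely because $x_{i_s}$ is red to $y^{*}$, the set $\{y^{*},x_{i_1+1},\dots,x_{i_{c-1}+1}\}$ is a blue $K_c$, which is exactly outcome~(2).

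The step I expect to be the main obstacle is designing the rerouting so that it keeps both prescribed endpoints $x_1$ and $x_a$ fixed while gaining one unit of length; unlike ordinary path-rotation arguments the endpoints here may not move, which is why the construction inserts $y^{*}$ and reverses an internal segment rather than simply rotating an endpoint. A secondary point requiring care is the bookkeeping at the boundary: a naive version would want $c$ red neighbours of $y^{*}$, but the vertex $x_a$ has no successor, so I instead exploit that $y^{*}$ itself is blue to all the successors and thereby enlarge the blue clique on the $c-1$ successors to one of order $c$. This is what makes the hypothesis $a\ge b(c-1)+d$ exactly sufficient, since it forces a pigeonhole bin of size $c-1$ rather than $c$. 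The remaining verifications, that the successors are distinct (immediate from the non-consecutiveness of the red neighbours) and that the exhibited vertex sequence is a genuine simple path, are routine.
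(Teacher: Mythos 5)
Your argument is correct and complete: the insertion observation (no $y_j$ has two consecutive red neighbours on the path), the pigeonhole count giving $y^{*}$ with $c-1$ red neighbours among $x_1,\dots,x_{a-1}$, and the rerouting $x_1\cdots x_p\,y^{*}\,x_q x_{q-1}\cdots x_{p+1}x_{q+1}\cdots x_a$ showing the successors are pairwise blue all check out, and together with the blueness of each successor to $y^{*}$ they yield the blue $K_c$. The paper itself does not prove this lemma but cites it from Erd\H{o}s, Faudree, Rousseau, and Schelp; your proof is the standard argument for that result, so there is nothing further to compare.
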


Now, let us explore how the above two lemmas can be applied to the Ramsey numbers involving trees. Given two graphs $G$ and $H$, their \emph{Ramsey number} $r(G, H)$ is defined as the smallest positive integer $N$ such that, for every red-blue edge coloring of the complete graph $K_N$, there exists either a red subgraph isomorphic to $G$ or a blue subgraph isomorphic to $H$. Burr~\cite{Burr1981} observed the following result. Prior to this, a weaker version, which did not involve the concept of chromatic surplus, was obtained by Chv\'atal and Harary~\cite{Chvatal1972}. The \emph{chromatic surplus} of a graph is defined as the cardinality of the smallest color class, taken over all proper colorings of the graph that use the minimum number of colors.

\begin{lemma}[Burr~\cite{Burr1981}] \label{lowerbound}
    Let $G$ be a connected graph with $n$ vertices, and let the chromatic number and chromatic surplus of $H$ be denoted by $\chi$ and $s$, respectively. If $n\ge s$, then 
    \begin{equation}\label{lower}
        r(G, H)\ge (n-1)(\chi-1)+s\,.
    \end{equation}
\end{lemma}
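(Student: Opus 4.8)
The plan is to establish the lower bound by exhibiting an explicit red-blue coloring of the complete graph on $N-1$ vertices, where $N=(n-1)(\chi-1)+s$, that contains neither a red copy of $G$ nor a blue copy of $H$; this shows $r(G,H)>N-1$, that is, $r(G,H)\ge N$. First I would partition the vertex set of $K_{N-1}$ into $\chi-1$ blocks $V_1,\dots,V_{\chi-1}$, each of size $n-1$, together with one further block $V_\chi$ of size $s-1$, noting that the sizes sum to $(\chi-1)(n-1)+(s-1)=N-1$. The coloring is then defined by making every edge inside a single block red and every edge joining two distinct blocks blue, so that the red graph is a disjoint union of cliques while the blue graph is the complete $\chi$-partite graph with the above parts.

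For the red side, I would observe that each red component is a clique of order either $n-1$ or $s-1$. Here the hypothesis $n\ge s$ enters in an essential way: it guarantees $s-1\le n-1<n$, so every red clique has fewer than $n$ vertices. Since $G$ is connected on $n$ vertices, any copy of $G$ would have to lie entirely within a single red component, which is impossible; hence there is no red $G$.

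For the blue side, suppose toward a contradiction that the blue graph contained a copy of $H$. Mapping each vertex of $H$ to the index of the block into which it is embedded yields a proper coloring of $H$, because two vertices placed in the same block are joined by a red (non-blue) edge and so cannot be adjacent in $H$. This coloring uses at most $\chi$ colors, and since $\chi(H)=\chi$ it must use exactly $\chi$ colors; as there are precisely $\chi$ blocks, every block then receives at least one vertex of $H$. By the definition of the chromatic surplus, in any proper $\chi$-coloring of $H$ the smallest color class has size at least $s$, so in particular the class sent to $V_\chi$ contains at least $s$ vertices. But $|V_\chi|=s-1<s$, a contradiction. Therefore the coloring contains no blue $H$ either, which completes the argument.

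The individual steps are routine; the point I would be most careful about is the blue side, specifically the two claims that the embedding induces a proper coloring using all $\chi$ colors, and that the surplus forces every color class—hence the one landing in the size-$(s-1)$ block—to have at least $s$ vertices. The only other place demanding attention is confirming that $n\ge s$ is exactly what prevents the size-$(s-1)$ red clique from containing a copy of $G$, since without it that clique could be large enough to swallow all of $G$.
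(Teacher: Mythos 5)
Your proof is correct. The paper does not prove this lemma itself --- it is quoted from Burr's 1981 paper --- and your argument is precisely the canonical construction behind it: $\chi-1$ red cliques of order $n-1$ plus one of order $s-1$, with the connectivity of $G$ blocking a red copy and the induced proper $\chi$-coloring together with the definition of chromatic surplus blocking a blue copy. You also correctly identify the two points where care is needed, namely that every one of the $\chi$ blocks must receive a color class of size at least $s$, and that $n\ge s$ is exactly what keeps the size-$(s-1)$ clique from containing $G$.
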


To establish a unified formula for the Ramsey numbers of a broader class of graphs, Burr~\cite{Burr1981} introduced the notion of Ramsey goodness. A connected graph $G$ is said to be \emph{$H$-good} if equality holds in inequality~(\ref{lower}). In the special case where $H=K_k$, the graph $G$ is referred to as \emph{$k$-good} by Burr and Erdős~\cite{Burr1983}.
One of the most classical results in this area is due to Chv\'atal~\cite{Chvatal1977}, who proved that all trees $T_n$ are $k$-good. 
\begin{theorem}[Chv\'atal~\cite{Chvatal1977}]\label{Chv}
	$r(T_n,K_m)=(n-1)(m-1)+1\,.$
\end{theorem}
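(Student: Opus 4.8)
The lower bound follows immediately from Lemma~\ref{lowerbound}: the clique $K_m$ has chromatic number $\chi=m$ and chromatic surplus $s=1$, since every proper $m$-colouring of $K_m$ uses each colour exactly once and so the smallest colour class has a single vertex; as $T_n$ is connected on $n\ge 1=s$ vertices, we obtain $r(T_n,K_m)\ge (n-1)(m-1)+1$. Concretely this is witnessed by partitioning $(n-1)(m-1)$ vertices into $m-1$ blocks of size $n-1$, colouring edges inside each block red and all edges between blocks blue: every red component has only $n-1$ vertices and so contains no copy of the connected graph $T_n$, while the blue graph is $(m-1)$-partite and so contains no $K_m$.

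For the upper bound the plan is to induct on $m$, and I would first record the embedding tool that drives the argument: \emph{any graph with minimum degree at least $n-1$ contains every tree on $n$ vertices}. This follows by a greedy embedding. Order the tree's vertices $v_1,\dots,v_n$ so that each $v_i$ with $i\ge 2$ has a unique neighbour among its predecessors, and embed them in this order; when placing $v_i$ its already-embedded parent has at least $n-1$ neighbours in the host graph, of which at most $i-2\le n-2$ are occupied, leaving a free image.

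For the base case $m=2$ we have $N=n$: if a red--blue colouring of $K_n$ has no blue edge, then the red graph is all of $K_n$, which contains $T_n$. For the inductive step, assume $r(T_n,K_{m-1})=(n-1)(m-2)+1$ and consider any red--blue colouring of $K_N$ with $N=(n-1)(m-1)+1$ having no blue $K_m$; I want to produce a red $T_n$. If the red graph has minimum degree at least $n-1$, the embedding lemma finishes the job. Otherwise some vertex $v$ has red-degree at most $n-2$, hence blue-degree at least $N-1-(n-2)=(n-1)(m-2)+1$. Letting $B$ be the set of blue neighbours of $v$, we have $|B|\ge (n-1)(m-2)+1=r(T_n,K_{m-1})$, so the colouring restricted to $B$ contains either a red $T_n$ (and we are done) or a blue $K_{m-1}$. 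In the latter case, since $v$ is joined in blue to every vertex of $B$, this $K_{m-1}$ together with $v$ forms a blue $K_m$, contradicting our assumption; hence a red $T_n$ exists.

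The only genuinely substantive ingredient is the minimum-degree embedding lemma; everything else is the bookkeeping of the induction. The step most likely to require care is checking that the blue-degree bound falls \emph{exactly} on the threshold $r(T_n,K_{m-1})$, which is precisely where the sharp value $(n-1)(m-1)+1$ (rather than anything larger) is used: any slack in the count would leave $|B|$ below the inductive threshold and break the recursion.
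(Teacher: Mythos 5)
Your proof is correct. Note that the paper does not prove this statement at all --- Theorem~\ref{Chv} is quoted as a classical result of Chv\'atal, so there is no in-paper argument to compare against. What you have written is essentially Chv\'atal's original one-page proof: the lower bound via the $(m-1)$ disjoint red cliques of order $n-1$ (which is exactly the construction behind Lemma~\ref{lowerbound}), and the upper bound by induction on $m$, dichotomizing on whether the red graph has minimum degree at least $n-1$. The arithmetic at the critical step checks out: a vertex of red degree at most $n-2$ has blue degree at least $N-1-(n-2)=(n-1)(m-2)+1=r(T_n,K_{m-1})$, so the recursion closes with no slack, and the greedy minimum-degree embedding lemma you isolate is indeed the only substantive ingredient. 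The one presentational remark I would add is that your argument is a single induction on $m$ (no induction on $n$ or on the tree is needed), precisely because the embedding lemma handles the high-minimum-degree case in one stroke; this is worth stating explicitly since several textbook treatments phrase the proof as a double induction.
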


\noindent Since then, various generalizations and analogs of this result have been developed, establishing its central role in graph Ramsey theory.  Perhaps motivated by Theorem \ref{Chv}, 
Burr, Erd\H{o}s, Faudree, Rousseau, and Schelp~\cite{Burr1980}  started to consider when a connected sparse graph $G$ of size $n+k-2$ is $K_m$-good and obtained the following.

\begin{theorem}[Burr, Erd\H{o}s, Faudree, Rousseau, and Schelp~\cite{Burr1980}]\label{BEFRS}
	Let $G$ be a connected graph with $n$ vertices and $n+k-2$ edges. For $m\ge 3$, there exists a positive constant $\varepsilon$ such that if $1\le k\le \varepsilon n^{\frac{2}{m-1}}$ and $n$ is sufficiently large, then
	\[
	r(G,K_m)=(n-1)(m-1)+1\,.\]
\end{theorem}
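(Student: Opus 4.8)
The lower bound is immediate from Lemma~\ref{lowerbound}: for $H=K_m$ we have $\chi=m$ and chromatic surplus $s=1$, and $n\ge 1=s$, so $r(G,K_m)\ge (n-1)(m-1)+1$. Hence the whole task is the matching upper bound. The plan is to fix $N=(n-1)(m-1)+1$, take an arbitrary red/blue coloring of $E(K_N)$ containing no blue $K_m$, and produce a red copy of $G$ by induction on $n$, using the trichotomy method to drive the reduction.

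First I would quarantine the part of $G$ that is not tree-like. Choose a spanning tree $T\subseteq G$ and let $E^{*}=E(G)\setminus E(T)$ be the $k-1$ surplus edges, with $S$ the set of their endpoints, so $|S|\le 2(k-1)$. Away from $S$ the graph $G$ coincides with $T$, and because $k\le \varepsilon n^{2/(m-1)}=o(n)$ the set $S$ is tiny compared with $n$. I then apply the Tree Trichotomy Lemma (Lemma~1) to $T$ with parameters $\alpha,\beta,\gamma$ chosen so that $4\alpha\beta\gamma\le n$ while each of $\alpha,\beta,\gamma$ greatly exceeds $|S|$. This yields in $T$, hence in $G$, a suspended path, a matching of end-edges, or a star of end-edges; since $|S|$ is small I can pass to a large sub-structure disjoint from $S$, so the relevant vertices have the same degree in $G$ as in $T$.

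Each case then reduces $G$ to a strictly smaller connected graph $G'$ (with the same surplus $k-1$, hence still of the form ``$n'$ vertices, $n'+k-2$ edges''), embeds $G'$ in red by the induction hypothesis using only $(n'-1)(m-1)+1$ of the $N$ vertices, and re-grows the deleted piece inside the $N-(n'-1)(m-1)-1$ reserved vertices:
\begin{enumerate}[(1)]
	\item \emph{Suspended path.} Shorten the path to form $G'$, embed $G'$, and re-extend the path to full length with the Path Extension Lemma (Lemma~\ref{pathextension}); its alternative~(2) is excluded by the absence of a blue $K_m$, and its alternative~(3) is absorbed by the reserve of unused vertices.
	\item \emph{Matching of end-edges.} Delete the pendant leaves to form $G'$, embed $G'$, and attach the leaves by a red matching from the images of their parents into the reserve, supplied by Hall's theorem once one checks---using that the blue graph is $K_m$-free---that these parents have large red neighborhoods.
	\item \emph{Star of end-edges.} Delete the leaves of the star to form $G'$, embed $G'$, and greedily attach the leaves into a large red neighborhood of the image of the star's center.
\end{enumerate}

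The main obstacle---and the source of the constraint $k\le \varepsilon n^{2/(m-1)}$---is to carry out these extensions while \emph{also} realizing all $k-1$ surplus edges of $E^{*}$ in red and keeping the vertex budget exactly $N=(n-1)(m-1)+1$. Realizing a surplus edge forces two already-placed vertices to be red-adjacent, i.e.\ it restricts their images to a common red neighborhood; in a $K_m$-free blue coloring such common neighborhoods shrink by a controlled factor across the $m-1$ chromatic ``levels,'' and iterating this over the $k-1$ constraints is exactly what balances against the quadratic vertex count to force the exponent $2/(m-1)$ and to fix $\varepsilon$. A second delicate point, which I expect to require a strengthened induction hypothesis rather than a naive re-invocation of the theorem, is that reducing $n$ to $n'$ must not break the bound $k\le\varepsilon (n')^{2/(m-1)}$; this is what dictates how much structure may be peeled off per step and forces a careful base case. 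Both points are controlled precisely because the trichotomy parameters were chosen to dominate $|S|$, so that the removed structure never meets the surplus edges and $G'$ stays connected, sparse, and large.
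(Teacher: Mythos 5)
First, a point of reference: the paper does not prove this theorem at all. It is imported verbatim from Burr, Erd\H{o}s, Faudree, Rousseau, and Schelp~\cite{Burr1980} and then used as the base case $t=1$ of the double induction establishing Theorem~\ref{main}, so there is no in-paper proof to compare yours against. Your sketch follows the historically correct family of ideas (trichotomy, path extension, Hall's theorem, greedy star attachment), but as written it contains genuine gaps.

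The most serious gap is quantitative and occurs at the very first step. You apply the Tree Trichotomy Lemma to a spanning tree $T$ with parameters satisfying $4\alpha\beta\gamma\le n$ while simultaneously demanding that each of $\alpha,\beta,\gamma$ greatly exceed $|S|\le 2(k-1)$, so that the structure found survives after avoiding $S$. These two requirements together force roughly $32k^{3}\le n$, i.e.\ $k=O(n^{1/3})$, whereas the theorem allows $k\le\varepsilon n^{\frac{2}{m-1}}$, which exceeds $n^{1/3}$ whenever $3\le m\le 6$ (for $m=3$ the bound is linear in $n$, so your parenthetical claim that $k=o(n)$ is already false there). Thus the decomposition you propose cannot be carried out in the full claimed range of $k$; this obstruction is precisely why the present paper develops Lemma~\ref{Gpsize}, which works on $G$ itself rather than on a spanning tree and shows that when there is neither a constant-length suspended path nor a constant-size end-edge matching, the entire non-pendant part of $G$ has only $O(qk)$ vertices and a star with $\Omega(n)$ end-edges must exist. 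A second gap is that the step where the exponent $\frac{2}{m-1}$ actually enters is never argued: your paragraph about surplus edges forcing common red neighborhoods to ``shrink across the $m-1$ levels'' is a gesture, not a mechanism, and the surplus edges are not in fact realized one at a time in any known proof. The exponent comes from a polynomial Ramsey bound of the form $r(H,K_m)\le(2e(H)+1)^{\frac{m-1}{2}}$ (Lemma~\ref{upper}, or its weaker ancestor in~\cite{Burr1980}) applied to the small core of $G$, which embeds in fewer than $n$ vertices exactly when $k$ is at most about $n^{\frac{2}{m-1}}$; the surplus edges are embedded all at once as part of that core. Finally, your induction on $n$ is circular as stated: after peeling off a structure whose size is comparable to $k$, the hypothesis $k\le\varepsilon(n')^{\frac{2}{m-1}}$ may fail. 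You flag this but do not resolve it; the standard fix, visible in Case 1 of the paper's proof of Theorem~\ref{main}, is to peel off only a bounded number of vertices per step and absorb the loss into the constant $\varepsilon$.
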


If $H$ is not a complete graph but rather the disjoint union of copies of a complete graph, an intriguing question arises: are trees $T_n$ still $H$-good? This problem has only recently begun to attract attention. In 2023, the following three results were published: Hu and Peng~\cite{Hu1} demonstrated that any tree $T_n$ is $2K_m$-good for $n\ge 3$ and $m\ge 2$. They~\cite{Hu2} also established that any large tree is $tK_2$-good for $t\ge 3$. Subsequently, Luo and Peng~\cite{Luo2023} extended these two results, proving that any large tree is $tK_m$-good.

\begin{theorem}[Luo and Peng~\cite{Luo2023}]\label{LuoPeng}
	For any $t\ge 1$ and $m\ge 2$, if $n$ is sufficiently large, then 
\[
r(T_n,tK_m)=(n-1)(m-1)+t\,.
\]
\end{theorem}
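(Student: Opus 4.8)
The plan is to prove the two matching bounds. For the lower bound I would apply Lemma~\ref{lowerbound} to $H=tK_m$. Since each component $K_m$ forces $m$ colours and colours may be reused between components, $\chi(tK_m)=m$; moreover in any proper $m$-colouring each component uses every colour exactly once, so every colour class has size exactly $t$ and the chromatic surplus is $s=t$. As $n\ge t$ for large $n$, Lemma~\ref{lowerbound} gives $r(T_n,tK_m)\ge (n-1)(m-1)+t$.

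The substance is the upper bound. Fix a red--blue colouring of $K_N$ with $N=(n-1)(m-1)+t$ and assume there is no blue $tK_m$; I want a red $T_n$. First I would greedily extract a maximal family of vertex-disjoint blue $K_m$'s. By assumption there are $s\le t-1$ of them; let $B$ be their union, so $|B|=sm$, and put $R=V\setminus B$, which by maximality induces no blue $K_m$. On $R$ I would run the recursive Chv\'atal argument: if some vertex of $R$ has at least $(n-1)(m-2)+1$ blue neighbours inside $R$, those neighbours induce no blue $K_{m-1}$ and, being numerous, already contain a red $T_n$ by Theorem~\ref{Chv}; otherwise every vertex of $R$ has red degree at least $|R|-1-(n-1)(m-2)=n-2+t-sm$ inside $R$. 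The difficulty is that when $s$ is large this falls short of the threshold $n-1$ that would make a greedy embedding succeed, so the blue cliques in $B$ must be spent to cover the deficit $sm-t+1$.

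To organise this I would invoke the Tree Trichotomy Lemma with constants $\alpha,\beta,\gamma$ chosen large in terms of $m$ and $t$ but independent of $n$; since $n\ge 4\alpha\beta\gamma$ for $n$ large, $T_n$ contains a suspended path on $\alpha$ vertices, a matching of $\beta$ end-edges, or a star of $\gamma$ end-edges, and I would treat the three cases separately. In the path case I would delete the interior of the path to obtain a smaller tree $T'$, embed $T'$ in red inside $R$, and then regrow the path one vertex at a time with Lemma~\ref{pathextension}: the ``blue $K_c$'' alternative is blocked by the absence of a blue $K_m$ on $R$, while the ``$d$ common blue neighbours'' alternative is exactly what lets the path be rerouted through the vertices of $B$ and recover the missing length. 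In the matching case I would embed $T_n$ minus the $\beta$ matching-leaves into $R$ and then attach all leaves simultaneously by a Hall-type argument, the large red minimum degree supplying the marriage condition and $B$ absorbing any deficiency; the star case is similar, hosting the star centre at a vertex of large red degree and filling the leaves greedily.

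I expect the main obstacle to be the accounting in the final step: each blue $K_m$ placed into $B$ costs $m$ vertices while buying only one unit of the surplus $t$, so one must show that the very blue structure obstructing the red embedding can instead be reassembled, together with the $s$ cliques already found, into $t$ disjoint blue $K_m$'s --- the outcome we assumed away. Making these two competing demands on $B$ consistent against the single surplus $+t$ in $N$ is the delicate point, and the third alternative of Lemma~\ref{pathextension}, which hands us many vertices sharing a common blue neighbourhood, is the device I would rely on to keep the count tight.
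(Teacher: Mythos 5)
Your lower bound is fine: $\chi(tK_m)=m$, the chromatic surplus of $tK_m$ is $t$, and Lemma~\ref{lowerbound} gives $r(T_n,tK_m)\ge(n-1)(m-1)+t$. Your upper bound, however, is an outline whose central step is left unresolved, and you say so yourself. The deficit you compute is real --- after removing a maximal blue $K_m$-packing $B$ with $s\le t-1$ cliques, the red minimum degree in $R$ is only about $n-2+t-sm$, short of the $n-1$ needed for a greedy embedding --- but ``spending the blue cliques in $B$ to cover the deficit'' is precisely the part that needs a proof, and the mechanism you propose for it does not work as described. Lemma~\ref{pathextension} does not let you ``reroute the path through the vertices of $B$'': its three alternatives are (i) the path extends by one in red, (ii) a blue $K_c$ appears, or (iii) $d$ path-vertices are completely blue to all $b$ external vertices. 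To turn (ii) or (iii) into the contradiction you assumed away, you must first arrange that the external set $\{y_1,\dots,y_b\}$ is itself a blue $tK_{m-1}$ and take $c=mt$, $d=t$; then (ii) gives $K_{mt}\supseteq tK_m$ and (iii) gives $\overline{K_t}+tK_{m-1}\supseteq tK_m$. Locating that blue $tK_{m-1}$ (and, before that, a blue $(t-1)K_m$ disjoint from the partial red embedding) is exactly what requires an induction on $t$ and on $m$, with base cases $t=1$ (Chv\'atal/Theorem~\ref{BEFRS}) and $m=2$; your single maximal-packing extraction does not substitute for it. Your claim that ``the blue $K_c$ alternative is blocked by the absence of a blue $K_m$ on $R$'' is also off, since the lemma is applied to the complete graph on the path vertices together with the external vertices, which need not lie in $R$. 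The matching and star cases are only gestured at; in particular the star case needs $r(K_{1,n-1},tK_m)=(n-1)(m-1)+t$ (Lemma~\ref{star}) as a separate input, not a greedy filling.

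For comparison: the present paper does not reprove this statement directly but obtains it as the case $k=1$ of Theorem~\ref{main} (a tree is a connected graph with $n$ vertices and $n-1$ edges). That proof runs a double induction on $m$ and $t$, and in each of the three trichotomy cases first harvests a blue $(t-1)K_m$ from the induction on $t$ and a blue $tK_{m-1}$ from the induction on $m$, so that every obstruction to the red embedding (a blue $K_{mt}$, a blue $\overline{K_t}+tK_{m-1}$, or a blue $K_m$ disjoint from the $(t-1)K_m$) assembles into the forbidden blue $tK_m$. That bookkeeping is the content of the theorem; your proposal identifies where it must happen but does not carry it out.
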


The proof of Theorem \ref{LuoPeng} relies on the method of Tree Trichotomy. To the best of our knowledge, this method remained dormant for thirty years until it was recently revived by the above theorem.

Note that a tree $T_n$ is a minimal connected graph with $n$ vertices and $n-1$ edges, and it is clearly a sparse graph. A  natural question is:  Which sparse graphs are $tK_m$-good?
In particular, give a good bound for which any connected sparse graph of sufficiently large order $n$ with size not exceeding the bound is $tK_m$-good?

In this paper, we investigate the upper bound and the main result  is as follows. 

\begin{theorem}\label{main}
	Let $G$ be a connected graph with $n$ vertices and $n+k-2$ edges. For any integers $m\ge 2$ and $t\ge 1$, there exists a positive constant $c$ such that if $1\le k\le cn^{\frac{2}{m-1}}$ and $n$ is sufficiently large, then
	\[
		r(G,tK_m)=(n-1)(m-1)+t\,.\]
\end{theorem}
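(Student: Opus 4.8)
The lower bound is immediate from Lemma~\ref{lowerbound}: the graph $tK_m$ has chromatic number $\chi=m$, and in any proper $m$-colouring every colour class meets each copy of $K_m$ exactly once, so the chromatic surplus is $s=t$; since $n\ge t$ for large $n$, this gives $r(G,tK_m)\ge (n-1)(m-1)+t$. It therefore remains to prove the upper bound, and I would do this by induction on $t$. The case $t=1$ with $m\ge 3$ is exactly Theorem~\ref{BEFRS}, while $t=1$ with $m=2$ reduces to the trivial identity $r(G,K_2)=n$. For $m=2$ and general $t$, a blue graph with no $tK_2$ has matching number at most $t-1$, hence a vertex cover of bounded size, and the argument is elementary and handled separately; so I focus on $m\ge 3$.

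For the inductive step, set $N=(n-1)(m-1)+t$, fix a red/blue colouring of $K_N$ with no blue $tK_m$, write $R$ and $\bar R$ for the red and blue graphs, and assume for contradiction that there is no red $G$. Applying the induction hypothesis to any $N-1=(n-1)(m-1)+(t-1)$ of the vertices produces a blue $(t-1)K_m$, whose vertex set I call $W$, with $|W|=(t-1)m$. Let $U=V(K_N)\setminus W$, so that $|U|=(n-t)(m-1)+1$. If $U$ contained a blue $K_m$ it would be disjoint from $W$ and complete a blue $tK_m$; hence $\bar R[U]$ has no $K_m$, i.e.\ $\alpha(R[U])\le m-1$. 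Since $|U|=\bigl((n-t+1)-1\bigr)(m-1)+1$, Theorem~\ref{BEFRS} lets me embed in red inside $U$ any connected graph on $n-t+1$ vertices with at most $(n-t+1)+k-2$ edges, provided $k\le\varepsilon(n-t+1)^{2/(m-1)}$, which is guaranteed by shrinking the constant $c$ in the hypothesis.

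The plan is now to peel $t-1$ carefully chosen vertices off $G$, to embed the resulting core $G'$ (on $n-t+1$ vertices, still connected and with the same excess $k$) into $U$ by Theorem~\ref{BEFRS}, and to route the $t-1$ peeled vertices into the blue cliques $W$ so as to complete a red $G$. The choice of peelable vertices is governed by a sparse analogue of the Tree Trichotomy Lemma, which I would establish: there is a function $f$, \emph{linear in $k$}, such that if $n\ge f(\alpha,\beta,\gamma,k)$, then every connected graph with $n$ vertices and $n+k-2$ edges contains a suspended path on $\alpha$ vertices, a matching of $\beta$ end-edges, or a star of $\gamma$ end-edges. The proof mimics the tree case: if no vertex carries $\gamma$ leaves and the end-edges admit no matching of size $\beta$, then $G$ has fewer than $\beta\gamma$ leaves; suppressing the $O(k+\beta\gamma)$ branch vertices leaves a subdivision of a small multigraph, and the absence of a suspended path on $\alpha$ vertices then forces $n\le O\bigl((k+\beta\gamma)\alpha\bigr)$, a contradiction for large $n$. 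Applying this with $\alpha=\beta=\gamma=t$ and choosing $c$ small enough that $f(t,t,t,k)\le n$ produces one of the three structures of size at least $t$.

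In the matching case I delete $t-1$ end-edges with distinct attachment points and, after embedding $G'$, place the $t-1$ leaves into $W$ by a Hall's-theorem matching against their red neighbourhoods in $W$; in the star case I delete $t-1$ leaves at a common vertex $w$ and place them at red neighbours in $W$ of the image of $w$; in the suspended-path case I shorten the path by $t-1$ vertices, embed $G'$, and reinsert the vertices by lengthening the corresponding red path through $W$ via Lemma~\ref{pathextension} with target blue clique $K_m$. The hard part, and the place where the whole argument is won or lost, is precisely this routing into $W$: a naive obstruction, namely some attachment vertex $x\in U$ sending only blue edges into $W$, does \emph{not} immediately yield a contradiction, because $U$ was arranged to contain no blue $K_m$ and the extremal configuration for Theorem~\ref{BEFRS} (with $U$ a near-balanced union of $m-1$ red cliques) genuinely blocks a purely internal extension. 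The crux is therefore a re-packing argument for the blue $K_m$'s: I must show that any such obstruction can be converted, by swapping an obstructing vertex of $U$ into one of the cliques of $W$ and iterating, either into a successful placement of all $t-1$ peeled vertices or into a family of $t$ pairwise-disjoint blue $K_m$'s, i.e.\ the blue $tK_m$ sought. Controlling this exchange process—keeping the blue cliques disjoint, the embedded core intact, and the excess edges of $G$ respected throughout—is the main obstacle, and is where the sparseness bound $k\le cn^{2/(m-1)}$ and the slack of $(n-t)(m-2)$ unused vertices in $U$ are used decisively.
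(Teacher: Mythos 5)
Your lower bound, base cases, and the idea of a sparse trichotomy lemma with a bound linear in $k$ all match the paper, but the heart of your inductive step has a genuine gap that you yourself flag and do not close: the routing of the peeled vertices. You peel $t-1$ vertices from $G$, embed the core into $U=V(K_N)\setminus W$ via Theorem~\ref{BEFRS}, and then try to route the peeled vertices into $W$, the vertex set of the blue $(t-1)K_m$, which has only $(t-1)m$ vertices. As you observe, an attachment vertex $x$ that is blue to almost all of $W$ yields no contradiction: pigeonhole only gives that $x$ is blue-complete to one clique of $W$, producing a blue $K_{m+1}$ rather than a $t$-th disjoint blue $K_m$, and your proposed ``re-packing/exchange'' of obstructing vertices is a hope, not an argument --- it must simultaneously keep the $t-1$ blue cliques disjoint and preserve the embedded red core, and the extremal colouring ($m-1$ red cliques of size $n-1$ plus $t-1$ spare vertices) shows the configuration is rigid enough that no local swap obviously makes progress. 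The same problem infects your suspended-path variant: applying Lemma~\ref{pathextension} with the $y_i$ ranging over $W$ and target clique $K_m$ can return a blue $K_m$ meeting $W$, or path vertices blue-complete to $W$, neither of which yields a blue $tK_m$.

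The paper's resolution is structurally different and is the missing idea: it runs a \emph{double} induction on $m$ and $t$, and routes not into the small set $W$ but into a blue $tK_{m-1}$ found (by the induction on $m$) inside the large leftover set of roughly $(n-1)(m-2)+t$ vertices. With that target the dichotomy closes cleanly: either the $t$ (not $t-1$) peeled attachment points extend in red, or $t$ of them are blue-complete to the $tK_{m-1}$, giving $\overline{K_t}+tK_{m-1}\supseteq tK_m$; in the path case Lemma~\ref{pathextension} is applied with $c=mt$ and $d=t$ so that either outcome ($K_{mt}$ or $t$ vertices blue to all of the $tK_{m-1}$) contains $tK_m$. Relatedly, your star case with $\gamma=t$ is too weak: the paper's Lemma~\ref{Gpsize} is used to extract a star with $\Theta(n/t)$ end-edges, so that after deleting them the core has bounded size and can be embedded into the red neighbourhood of the centre of a red $K_{1,n-1}$ (via Lemma~\ref{star} and the polynomial bound of Corollary~\ref{npower}), followed by a greedy extension; deleting only $t-1$ leaves leaves you back in the unresolved routing situation. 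As written, the proposal identifies the crux correctly but does not prove the theorem.
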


Obviously, Theorem~\ref{BEFRS} is the special case ($t=1$) of Theorem \ref{main}. Note that for any positive integers $k$, $t$, and $m$ with $m\ge 2$, provided that $n$ is sufficiently large, the condition $1\le k\le cn^{\frac{2}{m-1}}$ always holds. Consequently, we immediately derive the following corollary, where the case $m=1$ holds trivially.

\begin{corollary}
	Let $k$, $m$, and $t$ be positive integers. For any connected graph $G$ with $n$ vertices and $n+k-2$ edges, if $n$ is sufficiently large, then \[r(G, tK_m)=(n-1)(m-1)+t\,.\]
\end{corollary}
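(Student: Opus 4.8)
The lower bound is immediate from Lemma \ref{lowerbound}: the graph $tK_m$ has chromatic number $\chi=m$, and in every proper $m$-colouring each of the $t$ components contributes exactly one vertex to each colour, so every colour class has exactly $t$ vertices and the chromatic surplus is $s=t$. Since $n\ge t$ once $n$ is large, this gives $r(G,tK_m)\ge (n-1)(m-1)+t=:N$. The work is in the matching upper bound, which I would phrase contrapositively: in a red/blue colouring of $K_N$ with no red $G$, produce a blue $tK_m$. The first step is to read off the structure forced by assuming there is \emph{no} blue $tK_m$ (toward a contradiction) alongside no red $G$. Let $\mathcal B$ be a maximum family of pairwise vertex-disjoint blue copies of $K_m$; by assumption $|\mathcal B|\le t-1$, so $B:=\bigcup\mathcal B$ has $|B|\le m(t-1)$. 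By maximality the blue graph has no $K_m$ on $W:=V(K_N)\setminus B$, i.e.\ the red graph $R[W]$ has independence number $\alpha(R[W])\le m-1$, while $|W|\ge (m-1)(n-t)+1$. On such a low-independence host Chv\'atal's theorem (Theorem \ref{Chv}) embeds red trees, and the Ramsey bound $R(m,s)\le\binom{m+s-2}{m-1}=O(s^{m-1})$ yields a red clique $Q\subseteq W$ of size $s=\Theta(n^{1/(m-1)})$; note $\binom{s}{2}=\Theta(n^{2/(m-1)})$, precisely the regime of the hypothesis $k\le cn^{2/(m-1)}$.

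The new ingredient is a trichotomy for the sparse graph $G$ itself. Since $G$ is connected with only $k-1$ independent cycles, a spanning tree $T$ differs from $G$ by $k-1$ edges, meeting at most $2(k-1)$ vertices. Applying the Tree Trichotomy Lemma to $T$ with target sizes inflated by a factor $\Theta(k)$, I obtain in $T$ a suspended path, an end-edge matching, or an end-edge star so large that, after discarding the $\le 2(k-1)$ vertices touched by the extra edges of $G$, a structure of the originally required size survives and is a genuine suspended path, end-edge matching, or end-edge star \emph{of $G$} (internal vertices of degree exactly $2$, respectively leaves, in $G$). This is valid provided $n\gg k\cdot\alpha\beta\gamma$, which holds for large $n$ because $k\le cn^{2/(m-1)}$ and the target sizes depend only on $m$ and $t$; I would isolate this as a ``sparse trichotomy'' lemma.

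I then run the three cases, in each peeling the flexible structure off to a strictly smaller sparse graph $G'$, embedding $G'$ in red (inducting on $n$, with the tree case supplied by Theorem \ref{LuoPeng} and the single-block case $t=1$ by Theorem \ref{BEFRS}), and extending the embedding so as to realise \emph{both} the $k-1$ extra edges of $G$ on red pairs \emph{and} the $t-1$ extra vertices beyond what $W$ alone supplies. In the suspended-path case I would route the path through unused vertices by the Path Extension Lemma (Lemma \ref{pathextension}), which lets a cycle of $G$ be opened into a path and closed by one red edge, and lets the path detour through vertices of $B$; whenever a required red connection is unavailable, the obstruction exhibits a vertex of $B$ that together with a blue $K_{m-1}$ inside $W$ forms a fresh disjoint blue $K_m$, completing a blue $tK_m$. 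In the matching case the end-edges are attached by Hall's theorem, and in the star case greedily at a red-maximum-degree vertex. The genuinely dense part of $G$—the branch vertices and short links of its $2$-core, numbering $O(k)$—is embedded either into the red clique $Q$ or, when it is too large for a single clique, by a clique-packing argument inside the low-independence graph $R[W]$; this packing is exactly where the budget $k\le cn^{2/(m-1)}$ is consumed.

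The main obstacle is this integrated extension step. One must place the extra-edge (dense) part of $G$ on red pairs without colliding with the tree-like bulk, while retaining enough flexible vertices to absorb the $t-1$ vertices coming from $B$, and one must verify that \emph{every} local failure to extend in red manufactures a new disjoint blue $K_m$. Arranging that the same $O(k)$ exceptional vertices simultaneously accommodate the extra edges, the clique/factor packing, and the $+t$ bookkeeping—all within $k\le cn^{2/(m-1)}$ and while keeping the trichotomy robust against the extra edges—is the technical heart of the argument.
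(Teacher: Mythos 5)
The statement you are proving is, in the paper, a one-line consequence of Theorem~\ref{main}: for fixed $k$ and $m\ge 2$ the hypothesis $1\le k\le cn^{2/(m-1)}$ of that theorem is automatically satisfied once $n$ is large (the exponent is positive and $c$ depends only on $m$ and $t$), and the case $m=1$ is trivial. Your proposal never invokes Theorem~\ref{main} and instead sketches a from-scratch proof of essentially the full main theorem, so it must be judged as such --- and as such it has genuine gaps.

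The central gap is the inductive architecture. You propose ``inducting on $n$, with the tree case supplied by Theorem~\ref{LuoPeng} and the single-block case $t=1$ by Theorem~\ref{BEFRS},'' but the argument you then describe repeatedly needs the values $r(G,(t-1)K_m)$ and $r(G,tK_{m-1})$ for the \emph{same} sparse graph $G$: these are what guarantee that a failed red extension really does manufacture a blue $(t-1)K_m$ plus a fresh disjoint blue $K_m$, or a blue $tK_{m-1}$ sitting in the blue neighbourhood of the obstructing vertices. Induction on $n$ gives you neither; the paper's proof is a double induction on $m$ and $t$ (with base cases $m=2$ via Lemma~\ref{mequals2} and $t=1$ via Theorem~\ref{BEFRS}), together with a check that the admissible range of $k$ is preserved under both inductive steps. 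Your assertion that ``every local failure to extend in red manufactures a new disjoint blue $K_m$'' is precisely the point that needs this machinery, and you state it rather than prove it. A second concrete gap is the star case: embedding the $2$-core of $G$ ``greedily at a red-maximum-degree vertex'' is exactly the step the paper identifies as failing for non-trees (a greedy algorithm cannot close the cycles of $G$); the paper instead finds a red $K_{1,n-1}$ via Lemma~\ref{star}, embeds the small $2$-core inside its red neighbourhood using the quantitative bound $r(G'-v,tK_m)<n$ from Corollary~\ref{npower}, and only then attaches leaves. Your red-clique/low-independence packing of the dense part is an interesting alternative idea but is left entirely unexecuted. Finally, your spanning-tree route to a ``sparse trichotomy'' is workable for fixed $k$ but note that a long suspended path of $T$ may be cut into up to $2k-1$ pieces by the endpoints of the extra edges, so the inflation factor must account for fragmentation, not just deletion; the paper avoids this by proving Lemma~\ref{Gpsize} directly on $G$.
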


Thus, Theorem~\ref{LuoPeng} can be regarded as a special case of Corollary 1 when $k=1$. Compared to the proof of Theorem~\ref{LuoPeng}, the establishment of our main result encounters two major challenges: first, in the third case of the trichotomy method, sparse graphs cannot be embedded directly using a greedy algorithm, as is possible for trees; second, we need to establish an upper bound for $k$ expressed as a polynomial function of $n$. To overcome these challenges, we develop an enhanced version of the Tree Trichotomy Lemma for sparse graphs, which is the key ingredient for proving Theorem \ref{main} and holds potential for future applications. 

\begin{lemma}\label{Gpsize}
	Let $G$ be a connected graph with $n$ vertices and $n+k-2$ edges, where $n\ge q\ge 3$ and $k\ge 1$. If $G$ contains neither a suspended path of order $q$ nor a matching consisting of $\ell$ end-edges, then the number of vertices of degree at least $2$ in $G$ is at most $\alpha$, and $G$ contains a star with at least $\left\lceil \frac{n-\alpha}{\ell-1} \right\rceil$ end-edges, where $\alpha=(q-2)(2\ell+3k-8)+1$.
\end{lemma}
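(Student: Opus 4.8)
The plan is to establish the degree bound first and then read off the star by a pigeonhole argument. Write $L$ for the set of leaves of $G$, and let $S$ denote the set of \emph{support vertices}, i.e.\ those having a neighbour in $L$. Since $G$ is connected with $n\ge 3$, no component is a single edge, so every leaf has exactly one neighbour and that neighbour lies in $S$; moreover a maximum matching of end-edges is obtained by selecting one leaf at each support vertex, so its size equals $|S|$. Hence the hypothesis that $G$ has no matching of $\ell$ end-edges is exactly the assertion $|S|\le \ell-1$, and this is the only way the matching hypothesis enters. I set $n_2$ for the number of vertices of degree at least $2$, so that $|L|=n-n_2$, and I dispose of the degenerate configurations at the outset: a path or a cycle on $n\ge q$ vertices already contains a suspended path of order $q$, while for a star one has $n_2=1\le\alpha$ directly.

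To bound $n_2$ I would pass to the multigraph $R$ obtained from $G$ by first deleting all leaves and then suppressing every vertex of degree $2$ (each maximal path whose internal vertices have degree $2$ is replaced by a single edge). Deleting a degree-$1$ vertex alters neither connectivity nor the cyclomatic number, so $G-L$ is connected, has $n_2$ vertices and $n_2+k-2$ edges and cyclomatic number $k-1$; suppression preserves all of these, so $R$ is a connected multigraph with $|V(R)|=|E(R)|-k+2$. The nodes of $R$ have degree $\ne 2$ in $G-L$, and a vertex can drop to degree $1$ in $G-L$ only if it lost a neighbour to $L$, hence only if it lies in $S$; thus $R$ has at most $|S|\le \ell-1$ nodes of degree $1$. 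Feeding this into the handshake identity $\sum_v\deg_R(v)=2|E(R)|$ together with $\deg_R(v)\ge 3$ for the remaining nodes bounds the number of nodes of degree at least $3$ and yields the clean estimates $|V(R)|\le 2\ell+2k-6$ and $|E(R)|\le 2\ell+3k-8$.

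It then remains to count the vertices suppressed along the edges of $R$ and to add them to $|V(R)|$. Each suppressed vertex has degree $2$ in $G-L$ and is of one of two kinds: either it already had degree $2$ in $G$, or it had degree $\ge 3$ in $G$ but lost all but two of its neighbours to leaves, and is therefore a support vertex. Consecutive vertices of the first kind form a run of genuine degree-$2$ vertices of $G$, and a run of length $s$ extends to a suspended path of order $s+2$; the no-suspended-path hypothesis thus caps each such run at $q-3$. Vertices of the second kind are support vertices, so across the whole graph there are at most $|S|\le\ell-1$ of them. The three ingredients — the per-run cap $q-3$, the global support budget $\ell-1$, and the edge count $|E(R)|\le 2\ell+3k-8$ — together with $|V(R)|=|E(R)|-k+2$ and $k\ge 1$ are exactly what is needed to reach the target
\[
n_2=|V(R)|+\#\{\text{suppressed vertices}\}\le (q-2)(2\ell+3k-8)+1=\alpha .
\]
Finally, for the star I observe that the $|L|=n-n_2\ge n-\alpha$ leaves are distributed among at most $\ell-1$ support vertices, so the pigeonhole principle produces a support vertex adjacent to at least $\lceil (n-\alpha)/(\ell-1)\rceil$ leaves, and the edges from it to those leaves are end-edges forming the required star.

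The main obstacle is precisely the bookkeeping in the third step. A support vertex of degree $\ge 3$ in $G$ may nonetheless become a degree-$2$ vertex of $G-L$, so suppressing it manufactures a long ``suspended path'' of $R$ that reflects no long suspended path of $G$; consequently the cap $q-3$ is valid only for genuine degree-$2$ vertices of $G$, and the anomalous support vertices must be charged separately against $|S|\le\ell-1$. Interleaving these two accountings so that they collapse to the single constant $\alpha$, rather than to a weaker bound carrying a spurious $(q-2)(\ell-1)$ term, is the delicate point: one must argue that the support vertices absorbed inside a single edge, although unbounded by $q$, are globally limited by the matching budget, and that this budget is already subsumed by the edge count $2\ell+3k-8$. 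Verifying that $G-L$ is neither empty nor a single vertex — which would force $G$ to be a star, already handled — completes the reduction.
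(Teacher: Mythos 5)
There is a genuine gap, and you have in fact pointed at it yourself without closing it. Your reduction suppresses \emph{every} vertex of degree $2$ in $G-L$, which includes support vertices that had degree $\ge 3$ in $G$ but lost neighbours to $L$. As you observe, the resulting contracted paths of $R$ are then not suspended paths of $G$, so the per-edge cap $q-3$ does not apply to them; a single edge of $R$ can absorb several runs of genuine degree-$2$ vertices separated by suppressed support vertices. If you carry out the accounting you sketch (each edge with $j$ absorbed support vertices contains at most $j+1$ runs of length $\le q-3$, and $\sum j \le \ell-1$ globally), you get roughly $n_2 \le |V(R)| + (q-3)|E(R)| + (q-2)(\ell-1)$, which exceeds the claimed $\alpha=(q-2)(2\ell+3k-8)+1$ by the very ``spurious $(q-2)(\ell-1)$ term'' you say must be avoided. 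Asserting that the three ingredients ``are exactly what is needed'' is not a proof; the interleaving you call the delicate point is precisely the step that is missing, and under your suppression rule it does not collapse to $\alpha$.

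The paper avoids the problem by never suppressing support vertices: it partitions the non-leaf vertices into $X$ (all support vertices), $Y$ (non-support vertices of degree $\ge 3$) and $Z$ (non-support vertices of degree $2$ in $G$), and contracts only paths whose internal vertices lie in $Z$. Every contracted path is then a genuine suspended path of $G$, so the cap $q-1$ on its order applies directly, and the nodes of the contracted multigraph are exactly $X\cup Y$ with $|X|\le \ell-1$ and $|Y|\le |X|+2k-4$ by the handshake identity — the same numerical bounds you derive, but with no anomalous absorbed vertices to charge separately. If you adopt this modification (declare all of $S$ to be nodes of $R$, even those of degree $1$ or $2$ in $G-L$), your argument goes through. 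You would also need to treat loops explicitly: a cycle of $G'$ all of whose vertices but one lie in $Z$ contracts to a loop, which can absorb $q-2$ rather than $q-3$ vertices, and the paper uses connectivity of the contracted multigraph (at least $|G''|-1$ non-loop edges) to make the final sum come out to $(q-2)e(G'')+1$ exactly; your sketch does not mention loops at all.
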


The rest of this paper is organized as follows.  Because the proof of Theorem~\ref{main} requires the upper bound for the Ramsey number of a graph $G$ versus a $K_m$, 
in Section~\ref{section2}, we provide a new upper bound for this Ramsey number as a by-product, which  refines the result of Burr, Erd\H{o}s, Faudree, Rousseau, and Schelp~\cite{Burr1980}. Section~\ref{section3} is devoted to proving the key Lemma \ref{Gpsize}. Section~\ref{section4} provides additional lemmas needed for the proofs. Finally, we present the proof of the main theorem.

We conclude this section by introducing some additional notation. Tur\'an number $\operatorname{ex}(n, H)$ is  the maximum number of edges in an $n$-vertex simple graph that does not contain $H$ as a subgraph.
For a graph $G$, let $|G|$ and $e(G)$ denote the number of vertices and edges in $G$, respectively. The graph $G+H$ represents the disjoint union of two graphs $G$ and $H$, where every vertex of $G$ is adjacent to every vertex of $H$. The red neighborhood of a vertex $v$ is defined as the set of vertices that are adjacent to $v$ via red edges. For a complete graph $K_N$ and a vertex subset $A$, we use $K_N[A]$ to denote the subgraph induced by $A$.

\section{A new upper bound for $r(G,K_m)$}\label{section2}

When $m\ge 3$, for a graph $G$ with $n$ vertices and $\ell$ edges, Burr, Erd\H{o}s, Faudree, Rousseau, and Schelp~\cite{Burr1980} established that $r(G,K_m)\le(n+2\ell)^{\frac{m-1}{2}}$. Although this result is sufficient for our use, it is still meaningful to consider improvements to the base of the bound. Here, we present a strengthened result as follows.

\begin{lemma}\label{upper}
		Let $G$ be a graph with $\ell$ edges and no isolated vertices. For $m\ge 3$, we have
		\[
		r(G,K_m)\le (2\ell+1)^{\frac{m-1}{2}}.
		\]
	\end{lemma}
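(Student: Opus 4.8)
The plan is to pass to the complementary (red) graph and argue by induction on $m$. Writing $R$ for the red graph on the $N=(2\ell+1)^{(m-1)/2}$ vertices, the hypothesis ``no blue $K_m$'' is exactly the statement that $R$ has independence number $\alpha(R)\le m-1$, so it suffices to prove: every graph $R$ on $N\ge(2\ell+1)^{(m-1)/2}$ vertices with $\alpha(R)\le m-1$ contains $G$ as a subgraph. Two features of a graph $G$ with $\ell$ edges and no isolated vertices drive the argument: first, $|V(G)|\le 2\ell$; second, from $\binom{d+1}{2}\le\ell$ its degeneracy satisfies $d<\sqrt{2\ell+1}$, so I fix a degeneracy ordering $v_1,\dots,v_n$ in which the back-degree $d_i$ of $v_i$ obeys $d_i\le\min\{i-1,\,2\ell/i\}$ and $\sum_i d_i=\ell$. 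These are precisely the inputs that let the base $2\ell+1$ (rather than the $n+2\ell$ of Burr--Erd\H{o}s--Faudree--Rousseau--Schelp) survive the counting.

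The inductive engine is the elementary \emph{non-neighbourhood reduction}: for any vertex $w$ of $R$, the set $\overline N(w)$ of non-neighbours of $w$ induces a graph with $\alpha(R[\overline N(w)])\le\alpha(R)-1$, since $w$ may be appended to any independent set contained in $\overline N(w)$. I then run the standard dichotomy at threshold $(2\ell+1)^{(m-2)/2}$. If some $w$ has at least $(2\ell+1)^{(m-2)/2}$ non-neighbours, then $\overline N(w)$ is a set of at least $(2\ell+1)^{((m-1)-1)/2}$ vertices with independence number at most $m-2$, and the induction hypothesis (with $m-1$ in place of $m$) embeds $G$ inside it. Otherwise every vertex of $R$ has fewer than $(2\ell+1)^{(m-2)/2}$ non-neighbours, i.e.\ a very large red degree, and I embed $G$ greedily along the degeneracy ordering: when $v_i$ is placed, its image must lie in the common red neighbourhood of the (at most $d_i$) already-placed back-neighbours and must avoid the $i-1$ used vertices, so a valid image exists as long as
\[ d_i\,(2\ell+1)^{(m-2)/2}+(i-1)<N=(2\ell+1)^{(m-1)/2}. \]

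The main obstacle is making this final inequality hold for \emph{every} $i$ simultaneously. Substituting $d_i\le\min\{i-1,2\ell/i\}$ reduces it to the one-variable estimate $\tfrac{2\ell}{i}(2\ell+1)^{(m-2)/2}+i<(2\ell+1)^{(m-1)/2}$, whose worst case sits near $i\approx\sqrt{2\ell}$ (where the two bounds on $d_i$ cross); there the margin is governed by $\sqrt{2\ell+1}-\sqrt{2\ell}$, and the inequality is comfortable once $m$ is moderately large but becomes tight for small $m$. Consequently the crux is the base case $m=3$, equivalently $r(G,K_3)\le 2\ell+1$: here the reduction only brings $\alpha(R)$ down to $2$, the generic min-degree greedy loses a lower-order $\sqrt{2\ell}$ term, and one must instead exploit the rigid structure of $\alpha(R)\le2$ graphs --- every non-neighbourhood is a clique and the complement is triangle-free --- through a matching/covering argument (or a separate induction on $\ell$) that is exactly tight on disjoint-union graphs such as $\ell K_2$, for which $r(\ell K_2,K_3)=2\ell+1$. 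I expect this base case, rather than the inductive step, to carry the real weight of the proof and to pin down the optimal constant.
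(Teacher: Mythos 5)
Your overall architecture (induction on $m$, with a dichotomy between a vertex of large blue degree --- feeding the case $m-1$ --- and a red graph of large minimum degree) is in the same spirit as the paper, but both halves of your argument have real problems. First, the base case. You correctly sense that $m=3$, i.e.\ $r(G,K_3)\le 2\ell+1$, ``carries the real weight'': this is precisely the theorem of Sidorenko and of Goddard--Kleitman, a nontrivial published result that the paper does not reprove but simply invokes as Lemma~\ref{triangle}. Gesturing at ``a matching/covering argument (or a separate induction on $\ell$)'' is not a proof of it; you must either cite it or supply a genuine argument, and the latter is a substantial undertaking.

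Second, and more seriously, your inductive step does not close as written. Your sufficient condition for the greedy embedding, $\tfrac{2\ell}{i}(2\ell+1)^{(m-2)/2}+i<(2\ell+1)^{(m-1)/2}$, is \emph{false} for $m=4$ at the crossover point $i\approx\sqrt{2\ell}$: putting $x=2\ell$, the inequality becomes $\sqrt{x}\,(x+2)<(x+1)^{3/2}$, whereas
\[
x(x+2)^2-(x+1)^3=x^2+x-1>0 .
\]
So the worst case you identify is not merely ``tight for small $m$'' --- it fails outright for $m=4$, and rescuing it would require exploiting the coupling between $d_i$ and $e(G[v_1,\dots,v_i])$ (e.g.\ that $d_i$ close to $i-1$ forces $G[v_1,\dots,v_i]$ to be nearly complete, hence $i$ close to $n$) far more carefully than you have. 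The paper's proof avoids this cliff entirely by not running a full greedy: it takes a minimal counterexample, reduces to connected $G$ (so $n\le\ell+1$), deletes a single minimum-degree vertex $v$ with $d=\deg v\le 2\ell/n$, embeds $G-v$ in red by minimality, and then \emph{counts} blue edges from the $d$ image vertices to the outside and applies the pigeonhole principle to find one image vertex with at least $r(G,K_{m-1})$ blue neighbours --- a union-type count over $d$ vertices rather than an intersection of $d$ neighbourhoods. That one-vertex extension plus pigeonhole is exactly what lets the base $2\ell+1$ survive for every $m\ge4$; your multi-vertex intersection greedy loses the necessary lower-order margin.
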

Take $m=3$ in Lemma \ref{upper}, we have the following result, which was obtained by two groups of authors, independently, so Lemma \ref{upper} can be regarded as a generalization of it. However, the proof of Lemma \ref{upper}  relies on the  result.

\begin{lemma}[Sidorenko~\cite{Sidorenko1993}; Goddard and Kleitman~\cite{Goddard1994}]\label{triangle}
    For any graph $G$ of $\ell$ edges without isolated vertices,
    \[
    R(G,K_3)\le 2\ell+1\,.
    \]
\end{lemma}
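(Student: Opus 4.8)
The plan is to pass to the complement and reduce the statement to a pure embedding problem. Set $N=2\ell+1$ and fix a red/blue colouring of $K_N$ with no blue $K_3$; write $\Gamma$ for the red graph and $\overline\Gamma$ for the blue graph. A blue triangle is exactly a red-independent triple, so the no-blue-$K_3$ hypothesis is equivalent to $\alpha(\Gamma)\le 2$, and the whole task becomes: \emph{every graph $\Gamma$ on $2\ell+1$ vertices with $\alpha(\Gamma)\le 2$ contains $G$.} Two elementary facts drive everything. First, since $G$ has no isolated vertices, $|V(G)|\le 2\ell<N$, and a \emph{connected} graph with $s$ edges has at most $s+1$ vertices. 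Second, the structural consequence of $\alpha(\Gamma)\le 2$: for any nonempty $S\subseteq V(\Gamma)$, the set of common non-neighbours of $S$ in $\Gamma$ (equivalently, the common blue-neighbourhood of $S$) induces a clique in $\Gamma$, since two non-adjacent such vertices together with any vertex of $S$ would form a blue triangle. In particular the blue-neighbourhood of a single vertex is always a red clique.

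Next I would reduce to the connected case and dispose of the extremal instance. Taking a maximum matching in $\Gamma$, its unmatched vertices form a red-independent set, hence number at most $2$, so $\nu(\Gamma)\ge\ell$; this already embeds $G=\ell K_2$, the unique $G$ with $|V(G)|=2\ell$. For general $G$ with components $C_1,\dots,C_r$, where $e(C_i)=\ell_i$ and $\sum_i\ell_i=\ell$, I would embed the components one at a time into disjoint vertex sets, using at each stage that an induced subgraph of $\Gamma$ still has independence number at most $2$. Because each connected $C_i$ has $|V(C_i)|=p_i\le \ell_i+1\le 2\ell_i$, before embedding $C_i$ the number of used vertices is at most $2\sum_{j<i}\ell_j\le 2(\ell-\ell_i)$, so at least $2\ell_i+1$ vertices remain. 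Thus it suffices to prove the connected case: a red graph with $\alpha\le 2$ on at least $2s+1$ vertices contains every connected $s$-edge graph.

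For the connected case I would induct on $s$ (the base $s=1$ being that a graph on $\ge 3$ vertices with $\alpha\le 2$ has an edge). Given connected $C$ with $s$ edges, choose a non-cut vertex $v$ (for instance a leaf of a spanning tree), so that $C-v$ is connected with $s-\deg(v)<s$ edges; embed $C-v$ by the inductive hypothesis and then reattach $v$ at an unused vertex joined in red to the images of all neighbours of $v$. The \textbf{main obstacle} is precisely this extension step: when $\deg(v)$ is large and only a few spare vertices remain, one must guarantee a common red-neighbour. If the extension fails, then every spare vertex lies in the union of the blue-neighbourhoods of the images $t_1,\dots,t_{\deg(v)}$ of $v$'s neighbours, and each such blue-neighbourhood is a red clique; one then tries to embed $C$ directly into a large red clique produced this way. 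The delicate point is that a crude pigeonhole over the $\deg(v)$ cliques loses a factor and can fail by a constant (for example on a friendship-type graph), so the argument must instead be driven by a careful choice of $v$ of small degree inside a leaf block together with a deficiency/weighting estimate on the full union of the blue-neighbourhoods, which is the technical heart supplied by Sidorenko and by Goddard and Kleitman. Once that common-neighbour guarantee is in place, the induction closes and yields $r(G,K_3)\le 2\ell+1$.
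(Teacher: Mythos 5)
Your reductions are sound as far as they go: the complement reformulation (no blue $K_3$ means $\alpha(\Gamma)\le 2$ for the red graph $\Gamma$), the observation that a common blue neighbourhood induces a red clique, the matching argument disposing of $G=\ell K_2$, and the component-by-component reduction to the connected case (the accounting $2\sum_{j<i}\ell_j\le 2(\ell-\ell_i)$, leaving at least $2\ell_i+1$ fresh vertices, is correct, and induced subgraphs inherit $\alpha\le 2$). But the proposal does not prove the lemma, and you say so yourself: at the decisive moment --- guaranteeing either a common red neighbour for the images $t_1,\dots,t_{\deg(v)}$ of $v$'s neighbours, or a red clique large enough to swallow $C$ when that extension fails --- you write that the required deficiency/weighting estimate ``is the technical heart supplied by Sidorenko and by Goddard and Kleitman.'' Citing the source papers for the core step of the very statement being proved makes the argument circular as a proof; what you have established unaided is the routine outer shell. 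It is worth noting how far your own skeleton actually reaches: if $C$ has a leaf $v$, the induction does close, since after embedding $C-v$ at least $2s+1-(|V(C)|-1)\ge s+1$ vertices are unused, and if none is red-adjacent to the single image $t_1$ then all of them lie in the blue neighbourhood of $t_1$ and hence form a red clique of size at least $s+1\ge |V(C)|$, which contains $C$ outright. The genuine difficulty is confined to $\delta(C)\ge 2$, where, exactly as your friendship-graph remark anticipates, pigeonholing the $\ge 2s+2-|V(C)|$ spare vertices over the $\deg(v)$ blue-neighbourhood cliques yields a red clique of size only about $\bigl(2s+2-|V(C)|\bigr)/\deg(v)$, already smaller than $|V(C)|$ for $C=K_4$, and no choice of non-cut vertex repairs this by counting alone.

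For the comparison you were asked about: the paper contains no proof of this lemma at all --- it is quoted as a known theorem of Sidorenko and of Goddard and Kleitman and used as a black box (it serves as the base case $m=3$ of the induction proving Lemma~\ref{upper}). So there is no in-paper argument to measure your proposal against; it must stand on its own, and in its present form it is an accurate road map whose hardest segment is missing. To complete it you would have to carry out one of the published arguments in full rather than gesture at it, or supply your own replacement for the extension step in the $\delta(C)\ge 2$ case.
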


	\begin{proof}[Proof of Lemma~\ref{upper}]
	
		We denote by $n$ the number of vertices in $G$. When the graph $G$ consists of only one edge, we have $r(K_2,K_m)=m\le 3^{\frac{m-1}{2}}$, and the lemma holds. Therefore, assume that $\ell\ge 2$.
		
		We proceed by induction on $m$. When $m=3$, the lemma holds by Lemma \ref{triangle}. Assume that the lemma holds for smaller values of $m$, and now consider the case where $m\ge 4$.
	
		We use the method of minimal counterexample. Let $G$ be a graph with the smallest number of edges for which the lemma does not hold. Let $N=\left\lfloor (2\ell+1)^{\frac{m-1}{2}} \right\rfloor$. Then there exists a red-blue edge coloring of the complete graph $K_N$ such that there is neither a red subgraph isomorphic to $G$ nor a blue subgraph isomorphic to $K_m$.
	
		If $G$ is disconnected, then $G$ can be decomposed into two disjoint subgraphs $G_1$ and $G_2$, where $G_1$ is a connected component of $G$. Let $G_1$ have $\ell_1$ edges. Since $G$ is the minimal counterexample, $K_N$ contains a red subgraph isomorphic to $G_1$. Moreover, in $K_N-V(G_1)$, we have
		\[
		r(G_2, K_m)\le (2(\ell-\ell_1)+1)^{\frac{m-1}{2}}\le (2\ell+1)^{\frac{m-1}{2}}-2\ell_1.
		\]
		The second inequality follows easily from $m\ge 4$. Thus, $K_N$ contains a red subgraph isomorphic to $G_1 \cup G_2$, which is $G$, leading to a contradiction. In the following, we assume that $G$ is connected.
	
		Let $v$ be a vertex in $G$ with the smallest degree, where the degree of $v$ is denoted by $d$. Denote $G-v$ by $H$. Since $G$ is the minimal counterexample, $r(H, K_m)\le N$. Thus, the complete graph $K_N$ contains a red subgraph isomorphic to $H$.
	
		Since the subgraph $H$ can be embedded in the red subgraph of $K_N$, we assume that the neighbors $v_1,v_2, \ldots, v_d$ of vertex $v$ in $G$ are embedded into vertices $x_1,x_2, \ldots,x_d$ in $K_N$, respectively. If there exists a vertex $x$ in $K_N-V(H)$ such that all edges between $x$ and $x_1, x_2, \dots, x_d$ are red, then $K_N$ would contain $G$ as a red subgraph, which contradicts our assumption. Therefore, the set $\{x_1, x_2, \dots, x_d\}$ must have at least $N-(n-1)$ blue edges to vertices in $K_N-V(H)$. We now state the following claim.

	 \begin{claim}\label{claim1}
		$N-(n-1) > d(r(G,K_{m-1})-1)$.
	\end{claim}
	\begin{proof}
		Since $m\ge 4$ and $G$ is a connected graph, we have 
		\[
		\frac{1}{2}(2\ell+1)^{\frac{m}{2}-1}\ge \frac{1}{2}(2\ell+1) > n-\frac{2\ell}{n}.
		\]
		This implies 
		\[
		\frac{1}{2} > \frac{n-\frac{2\ell}{n}}{(2\ell+1)^{\frac{m}{2}-1}}.
		\]
		Observing that $N+1 > (2\ell+1)^{\frac{m-1}{2}}$ and $r(G,K_{m-1}) \le (2\ell+1)^{\frac{m-2}{2}}$, we proceed as follows:
		\begin{align*}
			&1=\frac{2\binom{n}{2}}{n^2}+\frac{1}{n} \\
			\Longrightarrow\quad&1\ge \frac{2\ell}{n^2}+\frac{1}{n}\\
			\Longrightarrow\quad&2\ell\ge \frac{4\ell^2}{n^2}+\frac{2\ell}{n}\\
			\Longrightarrow\quad&2\ell+1>\left(\frac{2\ell}{n}+\frac{1}{2}\right)^2\\
			\Longrightarrow\quad&(2\ell+1)^{\frac{1}{2}}>\frac{2\ell}{n}+\frac{n-\frac{2\ell}{n}}{(2\ell+1)^{\frac{m}{2}-1}}\\
			\Longrightarrow\quad&(2\ell+1)^{\frac{m-1}{2}}>\frac{2\ell}{n}(2\ell+1)^{\frac{m-2}{2}}+\left(n-\frac{2\ell}{n}\right)\\
			\Longrightarrow\quad&
			N-(n-1)>\frac{2\ell}{n}((2\ell+1)^{\frac{m-2}{2}}-1)\\
			\Longrightarrow\quad&
			N-(n-1)>d(r(G,K_{m-1})-1).\\
		\end{align*}
		This completes the proof of the claim.
	\end{proof}

	Let us continue the proof of Lemma~\ref{upper}. Recall that there are at least $N-(n-1)$ blue edges between $\{x_1,x_2,\ldots,x_d\}$ and $K_N-V(H)$. By Claim \ref{claim1} and the pigeonhole principle, there exists at least one vertex, say $x_1$, that has at least $r(G,K_{m-1})$ blue edges to $K_N-V(H)$. Consequently, either there exists a red subgraph $G$ within $K_N-V(H)$, or there exists a blue subgraph $K_{m-1}$ within $K_N-V(H)$. In the latter case, together with the vertex $x_1$, this forms a blue $K_m$. In both cases, the lemma is proved.
\end{proof}

Combining the bound $r(G,nH)\le r(G,H)+(n-1)|H|$~\cite{Burr1975} with Lemma~\ref{upper}, we obtain the following result.

\begin{corollary}\label{npower}
Let $G$ be a graph with $\ell$ edges and no isolated vertices. For $m\ge 3$, we have
\[
r(G,tK_m)\le (2\ell+1)^{\frac{m-1}{2}}+m(t-1).
\]
\end{corollary}

\section{The trichotomy lemma for sparse graphs}\label{section3}

\begin{proof}[Proof of Lemma~\ref{Gpsize}]
Let $X$ be the set of neighbors of all degree-1 vertices in $G$, $Y$ be the set of vertices with degree at least 3 and not in $X$, and $Z$ be the set of vertices that are neither of degree 1 nor in $X\cup Y$. It is easy to see that all vertices in $Z$ have degree 2 in $G$. Moreover,
\[
|X|\le \ell-1\,,
\]
otherwise, $G$ would contain a matching consisting of at least $\ell$ end-edges, contradicting our assumption.

If $G$ contains a cycle such that all vertices on the cycle belong to $Z$, then considering that $G$ is connected, it follows that $G$ itself is this cycle. Since $G$ contains no suspended path with $q$ vertices, we have $n\le q-1$, which contradicts the assumption. Thus, for any cycle in $G$, at least one vertex on the cycle does not belong to $Z$.

Next, remove all degree-1 vertices from $G$ to obtain a new graph denoted by $G'$. Let $p$ be the number of vertices in $G'$. Then it has $p+k-2$ edges.  Clearly, the vertex set of $G'$ is $X\cup Y\cup Z$, and the vertices in $X$ have degree at least 1 in $G'$, the vertices in $Y$ have degree at least 3 in $G'$, and all vertices in $Z$ have degree 2 in $G'$. In the following, we prove that \[|G'|\le (q-2)(2\ell+3k-8)+1\,.\]

From the total degree of $G'$, it follows that 
\[
|X|+3|Y|+2|Z|\le 2(p+k-2)=2(|X|+|Y|+|Z|+k-2)\,.
\]
Simplifying this inequality yields
\[
|Y|\le |X|+2k-4\le \ell+2k-5\,.
\]

If $G'$ contains a cycle $v_1v_2\cdots v_av_1$ such that $v_2, \ldots,v_a\in Z$, then $v_1\not\in Z$. By the assumption, we have $a\le q-1$. We remove the vertices $v_2,\ldots,v_a$ and their incident edges from $G'$ and add a loop to the vertex $v_1$, where a loop is considered an edge. This operation decreases both the number of vertices and edges by $a-1$. The process is repeated until $G'$ contains no cycles of this form.

If $G'$ contains a suspended path $v_1v_2\cdots v_b$, where $v_2,\ldots,v_{b-1}\in Z$ and $v_1,v_b\in X\cup Y$, then by the assumption, $b\le q-1$. We remove the vertices $v_2,\ldots,v_{b-1}$ and their incident edges from $G'$, and add an edge between $v_1$ and $v_b$. This operation decreases both the number of vertices and edges by $b-2$. The process is repeated until $G'$ contains no vertices from $Z$. The resulting graph is denoted by $G''$.

Note that $G''$ may be a multigraph. In fact, it is constructed by contracting each suspended path into an edge, thereby reducing the same number of vertices and edges. Thus, we have 
\[
|G''|=|X|+|Y|\le 2\ell+2k-6 \quad \text{and} \quad e(G'')=|G''|+k-2\le 2\ell+3k-8\,.
\]

By replacing each edge in $G''$ with a suspended path of appropriate length, we can recover $G'$ from $G''$. Since $G$ does not contain any suspended paths with $q$ vertices, for each loop in $G''$, the reconstruction process adds at most $q-2$ vertices; for each edge in $G''$ that is not a loop, at most $q-3$ vertices are added during the reconstruction. Observing that $G''$ is connected, it contains at least $|G''|-1$ edges that are not loops, and at most $e(G'')-|G''|+1$ edges that are loops. Therefore,
\begin{align*}
    |G'|&\le |G''|+(q-3)(|G''|-1)+(q-2)(e(G'')-|G''|+1) \\
    &= (q-2)e(G'')+1 \\
    &\le (q-2)(2\ell+3k-8)+1\,.
\end{align*}

It follows that the number of vertices in $G$ with degree greater than $1$ is at most $\alpha=(q-2)(2\ell+3k-8)+1$. In other words, the number of vertices in $G$ with degree $1$ is at least $n-\alpha$. Since the total number of neighbors of degree-1 vertices, $|X|$, is at most $\ell-1$, by the pigeonhole principle, $G$ contains a star with at least 
\[
\left\lceil \frac{n-\alpha}{\ell-1} \right\rceil
\]
end-edges. This completes the proof.
\end{proof}

\section{Additional lemmas}\label{section4}

The following two lemmas will be used for the second and third cases of the trichotomy proof, respectively.

\begin{lemma}[Hall~\cite{Hall1935}]\label{Hall}
    Consider a complete bipartite graph $K_{a,b}$, where $a\le b$, with parts $X=\{x_1, x_2, \dots, x_a\}$ and $Y=\{y_1, y_2, \dots, y_b\}$, whose edges are colored red and blue. Then, one of the following holds:
    \begin{enumerate}[(1)]
        \item There exists a red matching of size $a$.
        \item For some $0\le c\le a-1$, there exists a blue subgraph $K_{c+1, b-c}$, where $c+1$ vertices are in $X$.
    \end{enumerate}
\end{lemma}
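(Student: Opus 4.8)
The plan is to recognize this statement as a two-coloured reformulation of the deficiency version of Hall's marriage theorem, applied to the red edges. Let $R$ denote the spanning subgraph of $K_{a,b}$ consisting of precisely the red edges, and for $S\subseteq X$ write $N_R(S)$ for the set of vertices of $Y$ joined to $S$ by at least one red edge. A red matching of size $a$ is exactly a matching of $R$ saturating $X$, which is outcome (1). So I would first dispose of the case in which such a matching exists, and otherwise work toward outcome (2).

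Assume no red matching of size $a$ exists. Then $R$ admits no matching saturating $X$, so by Hall's marriage theorem its condition must fail: there is a nonempty set $S\subseteq X$ with $|N_R(S)|<|S|$. Set $c=|N_R(S)|$. Since $\varnothing\ne S\subseteq X$ we have $1\le |S|\le a$, and hence $0\le c<|S|\le a$, giving the required range $0\le c\le a-1$.

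It then remains to read off a blue $K_{c+1,b-c}$. Choose any $c+1$ vertices of $S$, forming a set $S'\subseteq X$; this is possible because $|S|\ge c+1$. Because $S'\subseteq S$ we have $N_R(S')\subseteq N_R(S)$, so every vertex of $T:=Y\setminus N_R(S)$ has no red edge to $S'$, i.e.\ all edges between $S'$ and $T$ are blue. As $|S'|=c+1$ and $|T|=b-|N_R(S)|=b-c$ (with $b-c\ge b-a+1\ge 1$ since $a\le b$), the pair $(S',T)$ induces a blue complete bipartite graph $K_{c+1,\,b-c}$ with its $c+1$ vertices in $X$, which is outcome (2).

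Since the argument is a direct translation of Hall's theorem, there is no serious obstacle; the only care needed is the bookkeeping of sizes — confirming $c\le a-1$ and $b-c\ge 1$ so that the claimed blue subgraph is genuinely $K_{c+1,b-c}$ with nonempty parts — together with the observation that passing from $S$ to a $(c+1)$-subset $S'$ does not enlarge the red neighbourhood. If one prefers a self-contained proof avoiding an explicit appeal to Hall's theorem, the same set $S$ (a maximal violator of Hall's condition) can be produced by a standard augmenting-path argument on a maximum red matching, but invoking the marriage theorem directly is cleanest.
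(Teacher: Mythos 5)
Your proof is correct. The paper gives no proof of this lemma at all --- it is stated as a known result attributed to Hall's 1935 paper --- and your derivation via the deficiency form of Hall's marriage theorem (take a violating set $S\subseteq X$ with $|N_R(S)|<|S|$, set $c=|N_R(S)|$, and read off the blue $K_{c+1,\,b-c}$ between a $(c+1)$-subset of $S$ and $Y\setminus N_R(S)$) is the standard argument, with the size bookkeeping ($0\le c\le a-1$ and $b-c\ge 1$) handled correctly.
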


\begin{lemma}[Luo and Peng~\cite{Luo2023}]\label{star}
	For any $t\ge 1$ and $m\ge 2$, if $n\ge t+2$, then
	\[r(K_{1,n-1},tK_m)=(n-1)(m-1)+t\,.\]
\end{lemma}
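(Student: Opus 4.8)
The plan is to reduce the statement to an equitable-coloring problem, after first disposing of the lower bound. The bound $r(K_{1,n-1},tK_m)\ge (n-1)(m-1)+t$ is immediate from Lemma~\ref{lowerbound}: the star $K_{1,n-1}$ is connected with $n$ vertices, while $tK_m$ has chromatic number $\chi=m$ and chromatic surplus $s=t$, since any proper $m$-coloring of $tK_m$ must use all $m$ colors on each copy of $K_m$, so every color class has exactly $t$ vertices. As $n\ge t+2>s$, inequality~(\ref{lower}) applies. Hence the content lies in the matching upper bound, where I would spend all the effort.

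For the upper bound, set $N=(n-1)(m-1)+t$ and fix any red--blue coloring of $K_N$ containing no red $K_{1,n-1}$; it suffices to produce a blue $tK_m$. Having no red $K_{1,n-1}$ means precisely that every vertex has at most $n-2$ red neighbors, i.e. the red graph $R$ satisfies $\Delta(R)\le n-2$. The key reformulation is that a blue $K_m$ is exactly an independent set of size $m$ in $R$, so a blue $tK_m$ is the same as $t$ pairwise disjoint independent $m$-sets in $R$.

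I would then invoke the Hajnal--Szemer\'edi theorem: every graph of maximum degree at most $\Delta$ admits an equitable coloring with $\Delta+1$ colors, the color classes differing in size by at most one. Applied to $R$ with $\Delta+1=n-1$ colors, this partitions the $N$ vertices into $n-1$ independent sets of nearly equal size. Since $N/(n-1)=(m-1)+t/(n-1)$ and $1\le t\le n-2$, these sizes are exactly $m$ and $m-1$; writing $a$ for the number of classes of size $m$, the identity $am+(n-1-a)(m-1)=N$ gives $a=t$. Thus $R$ contains $t$ disjoint independent $m$-sets, i.e. the coloring contains a blue $tK_m$, which finishes the upper bound and hence the lemma.

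The step that requires care — and the reason a naive argument fails — is the chromatic-surplus term $+t$. A direct induction on $t$ (find one blue $K_m$ via Chv\'atal's Theorem~\ref{Chv}, delete it, recurse) deletes $m$ vertices per copy while the vertex budget only drops by $1$, so it stalls after essentially a single copy; likewise, greedily extracting independent $m$-sets using only $\alpha(R)\ge N/\chi(R)$ yields roughly $t/m$ copies rather than $t$. What rescues the exact count is the \emph{balancing} forced by equitable coloring, which guarantees precisely $t$ classes of the larger size $m$. I would therefore treat the equitable-coloring step as the crux, with the degree reduction and the arithmetic $a=t$ as routine bookkeeping.
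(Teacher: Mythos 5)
Your proof is correct, but note that the paper does not prove Lemma~\ref{star} at all: it is imported as a black box from Luo and Peng~\cite{Luo2023}, so there is no internal argument to compare against. Your route is a genuine and clean alternative. The lower-bound discussion (chromatic surplus of $tK_m$ equals $t$, then Lemma~\ref{lowerbound}) is exactly right, and the upper bound is a tidy reduction: no red $K_{1,n-1}$ forces $\Delta(R)\le n-2$ for the red graph $R$, the Hajnal--Szemer\'edi theorem then gives an equitable proper coloring of $R$ with $n-1$ classes, and since $N=(n-1)(m-1)+t$ with $1\le t\le n-2$ the division algorithm forces exactly $t$ classes of size $m$ and $n-1-t$ of size $m-1$; the $t$ large classes are disjoint blue copies of $K_m$. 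All the arithmetic checks out, including the boundary case $t=n-2$, and your diagnosis of why the naive ``delete one blue $K_m$ and recurse'' induction stalls is accurate --- that is precisely the obstruction that makes the $+t$ surplus term nontrivial. The trade-off is that you invoke Hajnal--Szemer\'edi, a deep theorem, where the original source presumably gets by with a more elementary extremal or inductive argument; in exchange your proof is essentially three lines once the reformulation (blue $tK_m$ $=$ $t$ disjoint independent $m$-sets in $R$) is made, and it makes transparent why the hypothesis $n\ge t+2$ is exactly what is needed (it is the condition $t\le n-2$ ensuring the remainder upon dividing $N$ by $n-1$ equals $t$). If you wanted to avoid the heavy hammer, it would be worth noting that only the special case of Hajnal--Szemer\'edi with $k=\Delta+1$ colors is used, but even that special case is the full strength of the theorem.
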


To prove a special case of Theorem \ref{main}, we need the following two classical results.

\begin{lemma}[Tur\'an~\cite{Turan1941}]\label{Turan}
	$\operatorname{ex}(n,K_{r+1})\le (1-\frac{1}{r})\frac{n^2}{2}$.
\end{lemma}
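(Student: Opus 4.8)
The plan is to prove the inequality by a double induction on $r$ and $n$, using the clean ``delete a clique'' argument that makes the stated bound fall out exactly. For fixed $r$ the statement is that any $K_{r+1}$-free graph on $n$ vertices has at most $(1-\frac1r)\frac{n^2}{2}$ edges. The base case $r=1$ is trivial, since a $K_2$-free graph has no edges; and for each $r$ the base case $n\le r$ follows from $\binom{n}{2}\le (1-\frac1r)\frac{n^2}{2}$, which rearranges to $r\ge n$ and so holds throughout that range.

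For the inductive step I would take a $K_{r+1}$-free graph $G$ on $n>r$ vertices. If $G$ contains no copy of $K_r$, then $G$ is $K_r$-free and the already-established bound for $r-1$ gives $e(G)\le (1-\frac{1}{r-1})\frac{n^2}{2}\le (1-\frac1r)\frac{n^2}{2}$. Otherwise, fix an $r$-clique on a vertex set $A$ and let $B=V(G)\setminus A$ with $|B|=n-r$. I would then split $e(G)$ into three parts: the edges inside $A$, contributing at most $\binom{r}{2}$; the edges between $A$ and $B$; and the edges inside $B$. Since $G$ is $K_{r+1}$-free, every vertex of $B$ has at most $r-1$ neighbours in $A$ (a common neighbour of all of $A$ would complete a $K_{r+1}$), so the middle part contributes at most $(r-1)(n-r)$, while the induction hypothesis applied to $G[B]$ bounds the last part by $(1-\frac1r)\frac{(n-r)^2}{2}$.

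Adding the three bounds and factoring out $\frac{r-1}{2r}$, the bracket becomes $r^2+2r(n-r)+(n-r)^2=(r+(n-r))^2=n^2$, so the total is exactly $\frac{r-1}{2r}\,n^2=(1-\frac1r)\frac{n^2}{2}$, completing the induction. This computation is the crux: it is precisely the perfect-square collapse that explains why the looser bound $(1-\frac1r)\frac{n^2}{2}$, rather than the exact extremal value attained by the balanced Turán graph, comes out so cleanly.

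Since the argument is elementary, there is no serious obstacle; the only points requiring care are the bookkeeping in the two base cases and the observation that the ``no $K_r$'' branch must be absorbed by the induction on $r$ rather than on $n$. An alternative I would keep in reserve is Zykov symmetrisation, which shows directly that an edge-maximal $K_{r+1}$-free graph is complete multipartite with at most $r$ parts and then finishes by convexity; this yields the exact extremal number but is longer than what the stated inequality requires.
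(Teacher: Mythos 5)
Your proof is correct. Note, however, that the paper does not prove this statement at all: it is quoted as a classical result of Tur\'an and used as a black box (in the proof of Lemma~\ref{mequals2}), so there is no argument in the paper to compare yours against. Your double induction is the standard clique-deletion proof and every step checks out: the base cases $r=1$ and $n\le r$ are handled correctly, the branch with no $K_r$ is properly absorbed by the induction on $r$, and the key computation
\[
\binom{r}{2}+(r-1)(n-r)+\Bigl(1-\tfrac{1}{r}\Bigr)\tfrac{(n-r)^2}{2}
=\frac{r-1}{2r}\bigl(r^2+2r(n-r)+(n-r)^2\bigr)=\Bigl(1-\tfrac{1}{r}\Bigr)\frac{n^2}{2}
\]
is exactly right, including the observation that each vertex of $B$ sees at most $r-1$ vertices of the fixed $r$-clique. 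This establishes the stated (slightly weakened) form of Tur\'an's theorem, which is all the paper needs.
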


\begin{lemma}[Chv\'atal and Harary~\cite{Chvatal1972}]\label{Chvatal}
    For any graph $G$ on $n$ vertices that contains no isolated vertices,
    \[
    r(G,2K_2)=
    \begin{cases}
    n+2, & \text{if } G \text{ is complete}, \\
    n+1, & \text{otherwise}.
    \end{cases}
    \]
\end{lemma}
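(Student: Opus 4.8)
The plan is to build the proof on the structural fact that a graph contains no $2K_2$ (equivalently, has matching number at most one) precisely when all of its edges pass through a single vertex, forming a star, or its edges form a triangle $K_3$, in either case together with possibly some isolated vertices. I would establish this characterization first by a short case analysis: given two edges sharing a vertex $v$, any further edge must meet both, which either keeps it incident to $v$ or closes a triangle and then forbids any additional edge. This dichotomy is what controls the blue graph in every coloring we consider.

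For the lower bounds I would exhibit explicit colorings. To see $r(G,2K_2)\ge n+1$ for every $G$ without isolated vertices, color $K_n$ by fixing a vertex $v$, painting all edges at $v$ blue and all remaining edges red; the blue graph is a star and so is $2K_2$-free, while the red graph is $K_{n-1}$ together with the red-isolated vertex $v$, so no copy of $G$ (which has minimum degree at least one and exactly $n$ vertices) can be embedded. When $G=K_n$ is complete, to obtain $r(G,2K_2)\ge n+2$ I would color $K_{n+1}$ by making the three edges on a vertex set $\{a,b,c\}$ blue and all others red: the blue graph is a triangle, hence $2K_2$-free, and every $n$-element subset either retains a blue edge among $\{a,b,c\}$ or contains all of $a,b,c$, so the red graph has no $K_n$.

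For the upper bounds I would take any red-blue coloring of $K_N$ with no blue $2K_2$ and invoke the characterization: the blue graph is a star centered at some $v$, or a triangle on some $\{a,b,c\}$. If it is a star, then $V\setminus\{v\}$ spans a red clique on $N-1$ vertices, which contains a red $K_n\supseteq G$ as soon as $N\ge n+1$. If it is a triangle, deleting two of its vertices leaves a red clique on $N-2$ vertices, giving a red $K_n\supseteq G$ whenever $N\ge n+2$; this already yields $r(G,2K_2)\le n+2$ in all cases.

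The main obstacle, and the only place where completeness of $G$ enters, is sharpening the upper bound to $n+1$ when $G$ is not complete. Here $N=n+1$ and the blue-triangle case leaves the red graph equal to $K_{n+1}$ minus a triangle, so a naive clique search fails. The key idea is to exploit a non-edge of $G$: since $G$ is not complete it has two non-adjacent vertices $x,y$. I would delete one triangle vertex $c$, so that the remaining $n$ vertices $\{a,b,w_1,\dots,w_{n-2}\}$ induce exactly $K_n$ minus the single edge $ab$, and then embed $G$ by sending $x\mapsto a$, $y\mapsto b$ and mapping the rest arbitrarily; because the only missing red pair is $\{a,b\}$ and $xy\notin E(G)$, every edge of $G$ lands on a red edge. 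This produces the desired red $G$ and closes the gap, while the triangle construction above shows the bound $n+1$ genuinely fails for complete $G$, explaining the dichotomy in the statement.
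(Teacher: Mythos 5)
Your proof is correct and complete. Note that the paper does not prove this lemma at all --- it is quoted directly from Chv\'atal and Harary~\cite{Chvatal1972} --- so there is no in-paper argument to compare against; your write-up is a valid self-contained derivation. The structural dichotomy you start from (a graph with matching number at most one is a star or a triangle plus isolated vertices) is established correctly, both lower-bound colorings work, and the only delicate step --- embedding a non-complete $G$ into $K_{n+1}$ minus a blue triangle by placing a non-edge of $G$ on the one missing red pair after deleting a triangle vertex --- is exactly the right way to obtain the sharper bound $n+1$ and to explain why completeness of $G$ forces the answer up to $n+2$.
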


The following lemma is the special case (when $m=2$) of Theorem \ref{main}.

\begin{lemma}\label{mequals2}
	Let $G$ be a connected graph with $n$ vertices and $n+k-2$ edges. For any positive integer $t$, define $c=\frac{1}{t+2}$ for $t=1,2$, and $c=\frac{1}{4t-5}$ for $t\ge 3$. If $1\le k\le cn^2$ and $n\ge 3\binom{c^{-1}}{2}$, then
	\[
	r(G, tK_2)=n+t-1\,.\]
\end{lemma}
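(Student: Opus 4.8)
The plan is to prove matching lower and upper bounds. The lower bound $r(G,tK_2)\ge n+t-1$ is immediate from Lemma~\ref{lowerbound}: the graph $tK_2$ has chromatic number $2$ and chromatic surplus $t$ (in any proper $2$-colouring each edge contributes one vertex to each class, so both classes have exactly $t$ vertices), and $n\ge t$ holds since $n$ is large. The work is the upper bound, i.e.\ showing every red--blue colouring of $K_N$ with $N=n+t-1$ and no blue $tK_2$ contains a red $G$. For $t=1$ this is trivial (with $N=n$, absence of a blue edge makes $K_n$ all red, which contains $G$); for $t=2$, the choice $c=\tfrac14$ gives $e(G)=n+k-2\le n+\tfrac{n^2}{4}-2<\binom n2$ for large $n$, so $G$ is not complete and Lemma~\ref{Chvatal} yields $r(G,2K_2)=n+1=n+t-1$ at once.

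The substance is $t\ge 3$ with $c=\tfrac1{4t-5}$. Fix such a colouring and let $M=\{a_1b_1,\dots,a_\nu b_\nu\}$ be a maximum blue matching, so $\nu\le t-1$. Let $U$ be the set of vertices missed by $M$; then $U$ is blue-independent, hence a red clique, with $|U|=N-2\nu=n+t-1-2\nu$. If $|U|\ge n$ we embed $G$ directly into the red clique $U$, so assume $1\le j:=n-|U|=2\nu-(t-1)\le\nu$. I would then augment $U$ by $j$ matched vertices while creating almost no blue edges, using the following structural fact: since $M$ is maximum there are no distinct $u,u'\in U$ with $u$ blue-adjacent to $a_i$ and $u'$ blue-adjacent to $b_i$ (else $u\,a_i\,b_i\,u'$ is a blue augmenting path). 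Hence for each edge $a_ib_i$ one endpoint has either no blue neighbour in $U$ or a single one; choosing such a ``good endpoint'' $w_i$ from $j$ of the edges produces a host set $H^\ast=U\cup\{w_1,\dots,w_j\}$ of exactly $n$ vertices in which every blue edge lies on the at most $2j\le 2t-2$ vertices $\{w_i\}\cup\{u_i\}$ (one possible blue edge $w_iu_i$ per chosen edge, plus blue edges among the $w_i$).

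Let $\beta$ be the graph of blue edges inside $H^\ast$. The red graph on $H^\ast$ is precisely $K_n$ with the edges of $\beta$ deleted, so a red copy of $G$ on $H^\ast$ exists if and only if some bijection $V(G)\to H^\ast$ maps no edge of $G$ onto an edge of $\beta$; equivalently, $\overline G$ contains $\beta$ as a subgraph. Since $\beta$ has at most $2t-2$ non-isolated vertices, it suffices to find a $K_{2t-2}$ in $\overline G$ and embed the non-isolated part of $\beta$ there, filling the rest of the bijection arbitrarily. Here Lemma~\ref{Turan} enters: with $e(\overline G)=\binom n2-(n+k-2)$, the hypotheses $k\le\tfrac{n^2}{4t-5}$ and $n\ge 3\binom{4t-5}{2}$ are exactly what force $e(\overline G)>\bigl(1-\tfrac1{2t-3}\bigr)\tfrac{n^2}{2}=\operatorname{ex}(n,K_{2t-2})$, giving $K_{2t-2}\subseteq\overline G$ and hence the desired red $G$.

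The \textbf{main obstacle} is the structural concentration step: a crude vertex-cover bound would spread the blue defect over $\Theta(t^2)$ vertices, forcing a clique $K_{\Theta(t^2)}$ in $\overline G$ and demanding $c=\Theta(t^{-2})$, far smaller than the claimed $\tfrac1{4t-5}$. It is the augmenting-path analysis confining all blue edges of $H^\ast$ to only $2t-2$ vertices that makes the linear-in-$t$ constant $c=\tfrac{1}{4t-5}$ admissible, and the Turán threshold $n\ge 3\binom{c^{-1}}{2}$ is precisely the crossover point of that last inequality; I expect the careful bookkeeping in these two steps, rather than any single hard idea, to be where the proof must be most precise.
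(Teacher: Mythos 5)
Your proposal is correct and follows essentially the same route as the paper: the same Turán-based claim that $\overline{G}\supseteq K_{2t-2}$ under the stated bounds on $k$ and $n$, the same maximum-blue-matching/augmenting-path argument confining all blue edges of an $n$-vertex host set to at most $2t-2$ vertices, and the same reduction of "red $G$ exists" to "$\overline{G}$ contains the blue defect graph." The only differences are cosmetic (e.g., your equality sign for $\operatorname{ex}(n,K_{2t-2})$ should be an inequality, which does not affect the argument).
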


\noindent \textbf{Remark}. We do not seek the optimal value of $c$ here. Through a slightly tedious discussion, it can be shown that when $n$ is sufficiently large, $c$ can be made very close to $\frac{1}{2t-2}$ for $t\ge 2$.

\begin{proof}[Proof of Lemma~\ref{mequals2}]
By Lemma~\ref{lowerbound}, it suffices to prove the upper bound. The conclusion is clearly true when $t=1$. When $t=2$, by Lemma~\ref{Chvatal}, it suffices to show that the graph $G$ is not complete. Since
\[n+k-2\le n+\frac{1}{4}n^2-2<\binom{n}{2}\] for $n\ge 5$, it follows that $G$ is not a complete graph.

When $t\ge 3$, we first prove the following claim.

\begin{claim}\label{K2t-2}
  The graph $\overline{G}$ contains $K_{2t-2}$ as a subgraph.
\end{claim}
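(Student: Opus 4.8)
The plan is to argue by contradiction and invoke Turán's theorem (Lemma~\ref{Turan}) on the complement $\overline{G}$. The key reformulation is that $\overline{G}$ contains $K_{2t-2}$ precisely when $G$ has an independent set of size $2t-2$, and writing $2t-2=(2t-3)+1$ puts the problem in exactly the form Turán's theorem addresses. So suppose, for contradiction, that $\overline{G}$ is $K_{2t-2}$-free. Then by Lemma~\ref{Turan} with $r=2t-3$,
\[
e(\overline{G})\le\left(1-\frac{1}{2t-3}\right)\frac{n^2}{2}.
\]

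Next I would compute $e(\overline{G})$ directly from the data of $G$. Since $G$ has $n$ vertices and $n+k-2$ edges, $e(\overline{G})=\binom{n}{2}-(n+k-2)=\frac{n^2}{2}-\frac{3n}{2}-k+2$. Substituting this into the Turán bound and solving for $k$ yields a lower bound of the form
\[
k\ge\frac{n^2}{2(2t-3)}-\frac{3n}{2}+2.
\]
The crucial observation is that the leading coefficient $\frac{1}{2(2t-3)}=\frac{1}{4t-6}$ is strictly larger than $c=\frac{1}{4t-5}$, because $4t-6<4t-5$. Hence the two quadratic densities differ by the positive amount $\frac{n^2}{(4t-5)(4t-6)}$, and the whole argument reduces to showing this gap beats the linear deficit $\frac{3n}{2}$.

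Finally I would close using the hypothesis $n\ge 3\binom{c^{-1}}{2}$; since $c^{-1}=4t-5$ this reads $n\ge\frac{3}{2}(4t-5)(4t-6)$. This threshold is calibrated exactly to force $\frac{n^2}{(4t-5)(4t-6)}\ge\frac{3n}{2}$, which in turn gives $\frac{n^2}{4t-6}-\frac{3n}{2}+2>\frac{n^2}{4t-5}=cn^2\ge k$. Combined with the lower bound on $k$ above, this is a contradiction, so $\overline{G}$ must contain $K_{2t-2}$.

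The main obstacle is the tightness of the arithmetic rather than any structural difficulty: the constant $c=\frac{1}{4t-5}$ is deliberately placed just below the Turán density $\frac{1}{4t-6}$ for $K_{2t-2}$-freeness, so success depends entirely on the $O(n^2)$ gap between these two densities dominating the $O(n)$ correction term. Care is needed to track the lower-order terms ($-\frac{3n}{2}+2$) so that every inequality points in the required direction, and the stated bound $n\ge 3\binom{c^{-1}}{2}$ is precisely the smallest threshold making the domination hold.
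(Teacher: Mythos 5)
Your proof is correct and follows essentially the same route as the paper: both compute $e(\overline{G})=\binom{n}{2}-(n+k-2)$, compare it against the Tur\'an bound $\left(1-\frac{1}{2t-3}\right)\frac{n^2}{2}$ for $K_{2t-2}$, and use $n\ge 3\binom{c^{-1}}{2}$ to make the quadratic gap $\frac{n^2}{(4t-5)(4t-6)}$ absorb the $\frac{3n}{2}$ term. The only cosmetic difference is that you run the inequality as a contradiction against $k\le cn^2$ while the paper directly verifies $e(\overline{G})>\left(1-\frac{1}{2t-3}\right)\frac{n^2}{2}$; the arithmetic is identical.
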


\begin{proof}
	\begin{align*}
		e(\overline{G})&=\binom{n}{2}-e(G) \\
		&=\binom{n}{2}-(n+k-2)\\
		&\ge \binom{n}{2}-(n+cn^2-2) \\
		&=\left(1-\frac{2}{4t-5}\right)\frac{n^2}{2}-\frac{3}{2}n+2\\
		&>\left(1-\frac{1}{2t-3}\right)\frac{n^2}{2}.
	\end{align*}
	The final inequality requires only that $n\ge 3(2t-3)(4t-5)=3\binom{c^{-1}}{2}$.

	Now by Lemma~\ref{Turan}, we conclude that $\overline{G}$ contains $K_{2t-2}$ as a subgraph.
\end{proof}

Consider a graph $H$ with $n+t-1$ vertices. We proceed by contradiction, assuming that $H$ does not contain $G$ as a subgraph, and that $\overline{H}$ does not contain $tK_2$ as a subgraph. In $\overline{H}$, let $M$ be a maximum matching with $s$ edges. Then $1\le s\le t-1$. Let $X=V(H)\setminus V(M)$, then $H[X]$ is a complete graph.

We claim that for each edge $uv \in M$, either $u$ or $v$ is nonadjacent to at most one vertex in $X$. If this is not the case, then both $u$ and $v$ are nonadjacent to at least two vertices in $X$. Without loss of generality, suppose $uu', vv' \in E(\overline{H})$, with $u', v' \in X$ and $u' \neq v'$. By removing the edge $uv$ from $M$ and adding edges $uu'$ and $vv'$, we obtain a larger matching in $\overline{H}$, which contradicts the maximality of $M$.

For each edge $uv \in M$, select a vertex from $\{u, v\}$ that has at least $|X|-1$ neighbors in $X$, and add it to the set $W$. Then, we have $|X|+|W|\ge |H|-(t-1)\ge n$. Select any $n-|X|$ vertices from $W$, and denote the resulting set by $W$ as well. Let $H'$ be the subgraph of $H$ induced by $X\cup W$.

Next, we prove that $G$ is a subgraph of $H'$, which leads to the final contradiction. Since $G$ and $H'$ have the same number of vertices, it suffices to show that $\overline{H'}$ is a subgraph of $\overline{G}$. In $\overline{H'}$, at most $2t-2$ vertices have a positive degree, and the remaining vertices are isolated. By Claim~\ref{K2t-2}, $\overline{G}$ contains $K_{2t-2}$ as a subgraph, so it also contains $\overline{H'}$ as a subgraph. This completes the proof of the lemma.
\end{proof}

\section{Proof of Theorem~\ref{main}}
\begin{proof}[Proof of Theorem~\ref{main}]
	Define a constant $c$ that depends only on $m$ and $t$ as follows:
	\[c=c(m,t)=\min\left\{\frac{\varepsilon(m)}{t},\frac{1}{4t}, \frac{1}{3(2q-3)}\right\},\]
	where $q=(m-1)t(mt-1)+2t$, and $\varepsilon=\varepsilon(m)$ is a constant depending solely on $m$, as defined in Theorem~\ref{BEFRS}. We do not aim to maximize $c$ here.

	We employ a double induction on $m$ and $t$ to establish the theorem. For the base cases, when $m=2$, the theorem follows from Lemma~\ref{mequals2}; and when $t=1$, it is verified by Theorem~\ref{BEFRS}. Assuming the theorem holds for $m-1$ and $t-1$, we now proceed to consider the case where $m\ge 3$ and $t\ge 2$.

	To apply the inductive method, it is necessary to ensure that the range of $k$ is also compatible with induction. It is straightforward to verify that
	\[c(m,t)<c(m,t-1) \text{ and}\ c(m,t)<\varepsilon(m)\,.\]
	Furthermore, regardless of the specific values of the constants $c(m,t)$ and $c(m-1,t)$, since $n$ is sufficiently large and $n^{\frac{2}{m-1}}$ is asymptotically negligible compared to $n^{\frac{2}{m-2}}$, it always holds that
	\begin{equation}\label{mtm-1t}
		c(m,t)n^{\frac{2}{m-1}}<c(m-1,t)n^{\frac{2}{m-2}}\,.
	\end{equation}

	By Lemma~\ref{lowerbound}, it suffices to prove the upper bound. Let $N=(n-1)(m-1)+t$. We proceed by contradiction. Assume that $K_N$ is a complete graph with a red-blue edge coloring such that it contains neither a red subgraph isomorphic to $G$ nor a blue subgraph isomorphic to $tK_m$. We will derive a contradiction in the following argument.

	By Lemma~\ref{Gpsize}, we discuss three cases separately.

\begin{case}
	$G$ contains a suspended path of at least $(m-1)t(mt-1)+2t$ vertices.
\end{case}

Let the suspended path be $v_1v_2\cdots v_p$, where $p\ge (m-1)t(mt-1)+2t$. Let $G'$ be the graph obtained from $G$ by shortening the suspended path by $t$ vertices, i.e., replacing the path $v_1v_2\cdots v_p$ with a new path $v_1v_2\cdots v_{p-t-1}v_p$. We make the following claim.

\begin{claim}\label{smallG}
	The graph $K_N$ contains $G'$ as a red subgraph.
\end{claim}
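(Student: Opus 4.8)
The plan is to set up a double induction to embed $G'$ into the red subgraph of $K_N$, where $G'$ is obtained from $G$ by deleting $t$ vertices from the long suspended path. Since $G'$ has $n-t$ vertices and $n+k-2-(t+1) = (n-t)+(k-2)$ edges, it fits the inductive framework with the same size parameter $k$ but fewer vertices. The key observation is that $N = (n-1)(m-1)+t$ is exactly the Ramsey threshold for $(n-1)$-vertex graphs against $tK_m$, but we want to find $G'$ with $n-t$ vertices, so there is room to spare. First I would check that $K_N$ has enough vertices to apply the inductive hypothesis to $G'$ versus $tK_m$: the relevant threshold is $r(G', tK_m) \le (|G'|-1)(m-1)+t = (n-t-1)(m-1)+t$, and since $N - [(n-t-1)(m-1)+t] = t(m-1) > 0$, we have $N \ge r(G', tK_m)$, so $K_N$ contains either a red $G'$ or a blue $tK_m$. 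The blue $tK_m$ is excluded by assumption, giving the claim directly.

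The subtlety is making the induction genuinely valid, which requires verifying that the size bound on $k$ is compatible. Here I would invoke the fact, already established in the excerpt, that $c(m,t)n^{\frac{2}{m-1}}$ lies below the corresponding thresholds needed for the inductive applications (the inequalities $c(m,t)<c(m,t-1)$, $c(m,t)<\varepsilon(m)$, and inequality~(\ref{mtm-1t})). For Claim~\ref{smallG} specifically, since $|G'| = n-t < n$ and $G'$ has the same excess parameter $k$, the hypothesis $1 \le k \le c\,n^{\frac{2}{m-1}} \le c\,(n-t)^{\frac{2}{m-1}}\cdot(1+o(1))$ must be checked to still satisfy the constraint for the $(n-t)$-vertex instance; because $n$ is large and $t$ is a constant, the ratio $n^{\frac{2}{m-1}}/(n-t)^{\frac{2}{m-1}} \to 1$, so the bound survives with room to spare given the strict inequalities defining $c$.

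The main obstacle I anticipate is not the counting but ensuring that the inductive hypothesis being applied is the right one. Since we are keeping $m$ and $t$ fixed and only decreasing the number of vertices from $n$ to $n-t$, this is really an application of the theorem statement itself at a smaller order $n' = n-t$ rather than a reduction in $m$ or $t$. Thus I would frame Claim~\ref{smallG} as an appeal to the conclusion of Theorem~\ref{main} for the smaller graph $G'$, valid because the theorem holds for all sufficiently large orders and $n-t$ is still sufficiently large. The one point demanding care is that the ambient complete graph has $N$ vertices, which must exceed $r(G', tK_m)$; the slack of $t(m-1)$ computed above confirms this comfortably. Once $G'$ is embedded as a red subgraph, the remainder of the case (outside this claim) will re-extend the shortened suspended path back to length $p$ inside the red graph using the path extension Lemma~\ref{pathextension}, but that lies beyond the statement of the claim itself.
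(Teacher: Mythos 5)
There is a genuine gap: your argument is circular. You propose to conclude $r(G',tK_m)\le (n-t-1)(m-1)+t$ by ``an appeal to the conclusion of Theorem~\ref{main} for the smaller graph $G'$,'' i.e., to the very statement being proved, at the same values of $m$ and $t$ but at order $n-t$. The paper's induction is a double induction on $m$ and $t$ only; there is no induction on the order $n$, so the instance of the theorem you invoke has not been established at this point of the proof. Nor can the argument be repaired by adding an induction on $n$: the theorem is only asserted for $n$ sufficiently large, so repeatedly passing from $n$ to $n-t$ has no base case to land on, and the descent is not well-founded. The $k$-range bookkeeping in your second paragraph is fine but does not address this.

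The paper circumvents the circularity in two steps. First, since $N=(n-1)(m-1)+t\ge r(G,(t-1)K_m)=(n-1)(m-1)+(t-1)$ by the induction hypothesis on $t-1$ (applied to $G$ itself, not to $G'$), and $K_N$ has no red $G$, one extracts a blue $(t-1)K_m$ with vertex set $A$. Second, in $K_N-A$ one applies the already-known Theorem~\ref{BEFRS} (the $t=1$ case, an external result) to the connected graph $G'$ on $n-t$ vertices: $r(G',K_m)=(n-t-1)(m-1)+1<N-|A|$, so $K_N-A$ contains a red $G'$ or a blue $K_m$; the latter would combine with the blue $(t-1)K_m$ on $A$ to form a forbidden blue $tK_m$. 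Every Ramsey bound used is either an instance of the induction hypothesis at a strictly smaller parameter $t-1$ or a previously proven theorem, which is exactly what your proposal is missing.
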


\begin{proof}
	It is easy to see that $1\le k\le c(m,t)n^{\frac{2}{m-1}} < c(m,t-1)n^{\frac{2}{m-1}}$. By the induction hypothesis, we have
	\[r(G,(t-1)K_m)=(n-1)(m-1)+(t-1)\,,\]
	which implies that $K_N$ contains a blue subgraph $(t-1)K_m$. Let $A$ denote the vertex set of this $(t-1)K_m$.

	It is easy to see that $1\le k\le \frac{\varepsilon}{t}n^{\frac{2}{m-1}} < \varepsilon(n-t)^{\frac{2}{m-1}}$. In $K_N-A$, by Theorem~\ref{BEFRS}, we have
	\begin{align*}
		r(G',K_m)&=(n-t-1)(m-1)+1 \\
		&< (n-1)(m-1)+t-(t-1)m\\
		&=N-|A|.
	\end{align*}
	If $K_N-A$ contains a blue $K_m$, then it would combine with the blue $(t-1)K_m$ induced by $K_N[A]$, yielding a blue subgraph $tK_m$, which leads to a contradiction. Thus, $K_N-A$ contains a red subgraph $G'$.
\end{proof}

By Claim~\ref{smallG}, there exists a red subgraph $G'$. Let the set of vertices in $G'$ that are not on the suspended path be denoted by $U$. In the subgraph $K_N-U$, consider a longest red path from vertex $v_1$ to vertex $v_p$ that contains at most $p$ vertices, denoted by $v_1Pv_p$. Such a red path exists by Claim \ref{smallG}. If the path $v_1Pv_p$ contains exactly $p$ vertices, then together with the vertices in $U$, it induces a red subgraph isomorphic to $G$, which is a contradiction. Therefore, the red path $v_1Pv_p$ in the subgraph $K_N-U$ contains at most $p-1$ vertices. Let the set of vertices that are neither in $U$ nor in $v_1Pv_p$ be denoted by $V$. Then we have $|V|\ge (n-1)(m-2)+t$.

From inequality~(\ref{mtm-1t}), we have $1\le k < c(m-1,t)n^{\frac{2}{m-2}}$. By the induction hypothesis,
\[r(G,tK_{m-1})=(n-1)(m-2)+t\,,\] so the subgraph $K_N[V]$ contains a blue subgraph isomorphic to $tK_{m-1}$.

Let the vertices of the path $v_1Pv_p$ be relabeled as $x_1, x_2, \dots, x_a$, where $v_1=x_1$ and $v_p=x_a$. Then, we have $a\ge (m-1)t(mt-1)+t$. Let the vertices of the blue $tK_{m-1}$ be labeled as $y_1, y_2, \dots, y_b$, where $b=(m-1)t$. Also, let $a'=mt$ and $b'=t$. So we have $a\ge b(a'-1)+b'$. By applying Lemma~\ref{pathextension}, since no red path of length $a$ connects $x_1$ to $x_a$, we either have a blue subgraph $K_{mt}$ or there exist $t$ vertices among $x_1, x_2, \dots, x_a$ such that every edge between these $t$ vertices and the vertices $y_1, y_2, \dots, y_b$ is blue. In the first case, we obtain a blue subgraph $K_{mt}$, and in the second case, we obtain a blue subgraph $\overline{K_t}+tK_{m-1}$. In both cases, there exists a blue subgraph $tK_m$, leading to a contradiction.

	\begin{case}
		$G$ contains a matching consisting of at least $2t$ end-edges.
	\end{case}
	
	Since
	\[
	1\le k\le c(m,t)n^{\frac{2}{m-1}} < c(m,t-1)n^{\frac{2}{m-1}}\,,
	\]
	by the induction hypothesis, we have
	\[
	r(G,(t-1)K_m)=(n-1)(m-1)+(t-1)\,,
	\]
	which implies that the graph $K_N$ contains a blue subgraph $(t-1)K_m$. We denote the vertex set of this subgraph by $A$.
	
	From inequality~(\ref{mtm-1t}), we obtain
	\[
	1\le k < c(m-1,t)n^{\frac{2}{m-2}}.
	\]
	In $K_N-A$, by the induction hypothesis, we find
	\begin{align*}
		r(G,tK_{m-1})&=(n-1)(m-2)+t \\
		&< (n-1)(m-1)+t-(t-1)m\\
		&=N-|A|.
	\end{align*}
	Thus, $K_N-A$ contains a blue subgraph $tK_{m-1}$. We denote the vertex set of this subgraph by $B$.
	
	Next, remove a matching consisting of $2t$ end-edges from $G$, and remove the corresponding leaves. The resulting graph is denoted as $G'$. Note that this $G'$ is distinct from the $G'$ used in the previous case.
	
	\begin{claim}\label{claim52}
		The graph $K_N-A-B$ contains a red subgraph isomorphic to $G'$.
	\end{claim}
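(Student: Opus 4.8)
The plan is to prove Claim~\ref{claim52} by a direct Ramsey-number comparison, in exact parallel with the argument already used for Claim~\ref{smallG}, but carried out inside the smaller host $K_N - A - B$. First I would record the parameters of $G'$: since $G'$ arises from the connected graph $G$ by deleting $2t$ leaves together with their incident end-edges, $G'$ is again connected and has $n - 2t$ vertices and exactly $(n-2t) + k - 2$ edges. Thus $G'$ is a connected sparse graph with the same excess parameter $k$ as $G$, which is precisely the shape of hypothesis required by Theorem~\ref{BEFRS}.

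Next I would verify that the range of $k$ remains admissible after passing to $G'$. Because $c(m,t) \le \varepsilon(m)/t$ and $t \ge 2$ in this case, for all large $n$ one has
\[
k \le c(m,t)\,n^{\frac{2}{m-1}} \le \frac{\varepsilon(m)}{t}\, n^{\frac{2}{m-1}} < \varepsilon(m)\,(n-2t)^{\frac{2}{m-1}},
\]
the last inequality holding since $(1 - 2t/n)^{\frac{2}{m-1}} \to 1$ as $n \to \infty$ while $1/t \le 1/2$. Hence Theorem~\ref{BEFRS} (applicable as $m \ge 3$) yields
\[
r(G', K_m) = (n - 2t - 1)(m-1) + 1 .
\]

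The heart of the argument is then a single vertex count. Since the blue $(t-1)K_m$ on $A$ and the blue $tK_{m-1}$ on $B$ are vertex-disjoint, $|A| = (t-1)m$ and $|B| = t(m-1)$, so
\[
|K_N - A - B| = (n-1)(m-1) + t - (t-1)m - t(m-1) = (n-2t-1)(m-1) + 1 + (m-1).
\]
Therefore $|K_N - A - B| = r(G', K_m) + (m-1) > r(G', K_m)$ because $m \ge 3$. Consequently the induced complete subgraph on $K_N - A - B$ contains either a red copy of $G'$ or a blue $K_m$. In the latter case this blue $K_m$ is disjoint from $A$, so together with the blue $(t-1)K_m$ spanned by $A$ it forms a blue $tK_m$, contradicting the standing assumption on the coloring of $K_N$. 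Hence $K_N - A - B$ must contain a red $G'$, as claimed.

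I expect the only delicate point to be the bookkeeping: one must check that deleting \emph{both} reserved blue structures $A$ and $B$ still leaves a surplus over $r(G', K_m)$, and the computation shows this surplus is exactly $m - 1 \ge 2$ — there is just enough room and no more. The reason $B$ has to be set aside at this stage (rather than reused) is that it will be consumed in the subsequent Hall-type extension of $G'$ to $G$, where the blue $(m-1)$-cliques of $B$ are completed into $K_m$'s whenever the matching of end-edges fails to embed in red; for the present claim its only role is to shrink the host, which the count above already absorbs.
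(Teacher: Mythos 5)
Your proposal is correct and follows essentially the same route as the paper: apply Theorem~\ref{BEFRS} to $G'$ (after checking $k<\varepsilon(n-2t)^{2/(m-1)}$), compare $r(G',K_m)=(n-2t-1)(m-1)+1$ with $N-|A|-|B|$, and rule out the blue $K_m$ outcome by combining it with the blue $(t-1)K_m$ on $A$. Your extra bookkeeping (the surplus being exactly $m-1$) and the closing remark about the later role of $B$ are harmless additions; the core argument matches the paper's proof.
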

	
	\begin{proof}
		Since \[
		1\le k\le \frac{\varepsilon}{t}n^{\frac{2}{m-1}} < \varepsilon(n-2t)^{\frac{2}{m-1}},
		\]
		in the graph $K_N-A-B$, by Theorem~\ref{BEFRS}, we have
		\begin{align*}
			r(G',K_m)&=(n-2t-1)(m-1)+1 \\
			&< (n-1)(m-1)+t-(t-1)m-t(m-1)\\
			&=N-|A|-|B|.
		\end{align*}
		If $K_N-A-B$ contains a blue $K_m$, it would form a blue $tK_m$ by combining with the blue $(t-1)K_m$ induced by $A$, which leads to a contradiction. Therefore, $K_N-A-B$ must contain a red subgraph isomorphic to $G'$.
	\end{proof}
	
	By Claim~\ref{claim52}, there exists a red subgraph $G'$ in $K_N-A-B$. In this $G'$, let $X$ denote the set of vertices incident to the matching removed from $G$, and let $Y$ represent the set of vertices not contained in $G'$. We have
	\[
	|X|=2t \quad \text{and} \quad |Y|=N-(n-|X|)=(n-1)(m-2)+3t-1.
	\]
	Note that between every vertex in $X$ and every $K_{m-1}$ in $B$, there is at least one red edge. Otherwise, the subgraph $K_N[X \cup B]$ would contain a blue $K_m$, and combining it with the blue $(t-1)K_m$ from $A$ would yield a blue $tK_m$, leading to a contradiction. Therefore, we claim that every vertex in $X$ is adjacent to at least $t$ vertices in $Y$ with red edges.
	
	By Lemma~\ref{Hall}, there is either a red matching that covers $X$, or a blue $K_{c+1, |Y|-c}$, where $0\le c\le 2t-1$. If the former holds, then $K_N$ would contain a red subgraph isomorphic to $G$, leading to a contradiction. If the latter holds, since every vertex in $X$ is adjacent to at least $t$ vertices in $Y$ with red edges, it follows that $c\ge t$. Thus, there are $t$ vertices in $X$ that are adjacent by blue edges to at least $|Y|-2t+1=(n-1)(m-2)+t$ vertices in $Y$. We denote this set of vertices in $Y$ by $Y'$.
	
	From inequality~(\ref{mtm-1t}), we obtain
	\[
	1\le k < c(m-1,t)n^{\frac{2}{m-2}}\,.
	\]
	In $K_N[Y']$, by the fact that $r(G,tK_{m-1})=(n-1)(m-2)+t$, we conclude that $K_N[Y']$ contains a blue subgraph isomorphic to $tK_{m-1}$. This, together with the vertices in $X$, would induce a blue subgraph $tK_m$, leading to a contradiction.

	\begin{case}
		There does not exist a suspended path in $G$ consisting of $(m-1)t(mt-1)+2t$ vertices, nor does there exist a matching formed by $2t$ end-edges.
	\end{case}
	
	After removing all vertices of degree 1 and their incident edges from the graph $G$, we obtain a graph denoted by $G'$. Clearly, $G'$ is a connected graph, and $e(G')=|G'|+k-2$. 
	By Lemma~\ref{Gpsize}, $|G'|\le \alpha$, where $\alpha=(q-2)(4t+3k-8)+1$ and $q=(m-1)t(mt-1)+2t$. Furthermore, $G$ contains a star formed by $\left\lceil \frac{n-\alpha}{2t-1} \right\rceil$ end-edges, with the center vertex denoted by $v$. By Lemma~\ref{star}, there exists a red star $K_{1,n-1}$ in $K_N$, with the center vertex denoted by $x$. We first embed vertex $v$ into vertex $x$, and then we embed $G'-v$ entirely into the red neighborhood of $x$. To achieve the latter, we need the following claim.
	
	\begin{claim}
		$r(G'-v,tK_m)<n$.
	\end{claim}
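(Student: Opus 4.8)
The plan is to bound $r(G'-v,tK_m)$ entirely in terms of the number of edges of $G'$, using the by-product Corollary~\ref{npower}, and then to verify that the resulting expression stays below $n$ once the choice of $c$ and the largeness of $n$ are exploited. Write $H=G'-v$, let $H_0$ be the subgraph of $H$ induced by its non-isolated vertices, put $\ell=e(H_0)=e(H)$, and let $s$ be the number of isolated vertices of $H$. Since in Case~3 the graph $G$ contains no matching of $2t$ end-edges and no suspended path on $q=(m-1)t(mt-1)+2t$ vertices, Lemma~\ref{Gpsize} gives $|G'|\le\alpha=(q-2)(4t+3k-8)+1$, and hence $e(G')=|G'|+k-2\le\alpha+k-2$.

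First I would dispose of the isolated vertices, which $H_0$ discards but which $H$ may contain. A short padding argument gives $r(H,tK_m)\le r(H_0,tK_m)+s$: in any coloring of $K_N$ with $N=r(H_0,tK_m)+s$ and no blue $tK_m$ one finds a red $H_0$, and because $r(H_0,tK_m)\ge|H_0|$ there remain at least $s$ further vertices to host the isolated vertices of $H$. Applying Corollary~\ref{npower} to $H_0$ then yields
\[
r(G'-v,tK_m)\le (2\ell+1)^{\frac{m-1}{2}}+m(t-1)+s.
\]
Next I would estimate $\ell$ and $s$. Deleting $v$ removes exactly $\deg_{G'}(v)$ edges, so $\ell=e(G')-\deg_{G'}(v)$; moreover a vertex becomes isolated in $H$ only if it is a degree-$1$ neighbour of $v$ in $G'$, so $s\le\deg_{G'}(v)$. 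The useful consequence is the folding inequality $2\ell+s\le 2e(G')-\deg_{G'}(v)\le 2e(G')\le 2(\alpha+k-2)$. Substituting $\alpha$ shows that $2e(G')$ grows linearly in $k$ with leading coefficient $6(q-2)+2=6q-10$, while $k\le c\,n^{2/(m-1)}$ and $c\le\frac{1}{3(2q-3)}=\frac{1}{6q-9}$.

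It remains to check the arithmetic in the two regimes. For $m=3$ the exponent $\frac{m-1}{2}$ equals $1$, so the displayed bound becomes $2\ell+1+s+3(t-1)\le 2e(G')+3t-2$, which is at most $\frac{6q-10}{6q-9}\,n$ plus a constant and is therefore below $n$ for large $n$. For $m\ge4$ the term $s\le\alpha$ is only $O(n^{2/(m-1)})=o(n)$, and since $2\ell+1\le 2e(G')+1\le \frac{6q-10}{6q-9}n^{2/(m-1)}(1+o(1))$, raising to the power $\frac{m-1}{2}$ produces $\bigl(\frac{6q-10}{6q-9}\bigr)^{(m-1)/2}n(1+o(1))$, whose leading coefficient is strictly below $1$; adding $m(t-1)+s=o(n)$ keeps the total under $n$.

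The hard part will be precisely the $m=3$ regime, where both the edge term and the isolated-vertex count $s$ are linear in $n$ and cannot be bounded separately without exceeding $n$. The crux is the folding inequality $2\ell+s\le 2e(G')$, which merges the two linear contributions into a single edge count; this is exactly what makes the deliberately conservative choice $c\le\frac{1}{3(2q-3)}$ enough, since it forces the combined leading coefficient $\frac{6q-10}{6q-9}$ to be strictly less than $1$. For $m\ge4$ the exponent $\frac{m-1}{2}>1$ provides additional slack, so there the only care needed is to confirm that the contributions of $m(t-1)$ and the isolated vertices are of lower order than $n$.
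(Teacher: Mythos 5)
Your proof is correct and follows essentially the same route as the paper: bound the number of edges of $G'-v$ via Lemma~\ref{Gpsize} and the identity $e(G')=|G'|+k-2$, apply Corollary~\ref{npower}, and check that the choice $c\le\frac{1}{3(2q-3)}$ forces the leading coefficient $\frac{6q-10}{6q-9}<1$, so the bound drops below $n$ for large $n$. The one genuine difference is that you explicitly account for the isolated vertices that $G'-v$ may acquire (to which Corollary~\ref{npower} does not literally apply), a point the paper glosses over; your folding inequality $2\ell+s\le 2e(G')$ handles this cleanly, including in the delicate $m=3$ regime where both contributions are linear in $n$.
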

	
	\begin{proof}
		Clearly, $e(G'-v)\le e(G')-1\le \alpha+k-3$. By Corollary~\ref{npower}, we have
		\[
		r(G'-v,tK_m)\le\left(2(\alpha+k-3)+1\right)^{\frac{m-1}{2}}+m(t-1).
		\]
		Therefore, we only need to prove that 
		\[
		\left(2(\alpha+k-3)+1\right)^{\frac{m-1}{2}}+m(t-1)<n,
		\]
		which is equivalent to proving
		\[
		2(\alpha+k-3)+1<(n-mt+m)^{\frac{2}{m-1}}.
		\]
		
		Since $k\le\frac{1}{3(2q-3)}n^{\frac{2}{m-1}}$ and $\alpha=(q-2)(4t+3k-8)+1$, we have
		\begin{align*}
			&2(\alpha+k-3)+1 \\
			= & 2\left((q-2)(4t+3k-8)+1+k-3\right)+1 \\
			= & (6q-10)k+2(q-2)(4t-8)-3 \\
			\le & \frac{6q-10}{6q-9}n^{\frac{2}{m-1}}+2(q-2)(4t-8)-3 \\
			< & (n-mt+m)^{\frac{2}{m-1}}.
		\end{align*}		
		The last inequality holds because both $m$ and $t$ are fixed positive integers, and $n$ is sufficiently large. This completes the proof of the claim.
	\end{proof}
	
	By the above claim, we can embed the graph $G'-v$ into the red neighborhood of vertex $x$. Thus, the graph $G'$ has been entirely embedded into the red subgraph of $K_N$.
	
	In the graph $G$, by removing all degree-1 vertices adjacent to $v$, we obtain a graph denoted by $G''$. Clearly, $G''$ contains $G'$ as a subgraph. Furthermore, by Lemma \ref{Gpsize}, 
	\[
	|G''|\le n-\left\lceil \frac{n-\alpha}{2t-1} \right\rceil.
	\]
	Next, based on the embedding of $G'$, we apply a greedy algorithm to embed $G''$ into the red subgraph of $K_N$. If at some step the embedding cannot proceed, there exists a vertex $z$ in $K_N$ that is adjacent to at most $|G''|-2$ red edges. In other words, the number of blue edges incident to vertex $z$ is at least
	\[
	N-|G''|+1\ge (n-1)(m-1)+t-n+\left\lceil \frac{n-\alpha}{2t-1} \right\rceil+1\ge (t-1)m+(n-1)(m-2)+1.
	\]
	The last inequality always holds when $n$ is large.
	
	Since 
	\[
	1\le k\le c(m,t)n^{\frac{2}{m-1}}<c(m,t-1)n^{\frac{2}{m-1}},
	\]
	by the induction hypothesis, we know that
	\[
	r(G,(t-1)K_m)=(n-1)(m-1)+(t-1),
	\]
	which implies that there exists a blue subgraph $(t-1)K_m$ contained in $K_N-z$. We denote the vertex set of this subgraph by $A$. Note that $z\not\in A$.

	Since $z$ is incident to at least $(t-1)m+(n-1)(m-2)+1$ blue edges, in the blue neighborhood of vertex $z$, there are at least $(n-1)(m-2)+1$ vertices that do not belong to the set $A$. Regardless of the values of the constants $c(m,t)$ and $\varepsilon(m-1)$, since $n$ is sufficiently large, $n^{\frac{2}{m-1}}$ is a smaller order term compared to $n^{\frac{2}{m-2}}$, and we always have
	\[
	c(m,t)n^{\frac{2}{m-1}}<\varepsilon(m-1)n^{\frac{2}{m-2}}.
	\]
	By Theorem~\ref{BEFRS}, in the graph induced by these vertices, there exists either a red subgraph isomorphic to $G$, or a blue $K_{m-1}$, which, together with vertex $z$ and the vertices in $A$, induces a blue subgraph containing $tK_m$. Both cases lead to a contradiction.
	
	Thus, $G''$ can always be embedded into the red subgraph of $K_N$. Next, we only need to embed the degree-1 vertices adjacent to $v$, and the entire graph $G$ can be embedded into the red subgraph of $K_N$. This is feasible because the vertex $x$, into which $v$ is embedded, is adjacent to at least $n-1$ red edges, so there are enough vertices to embed the degree-1 neighbors of $v$. This completes the proof of the main theorem.	
\end{proof}

\subsection*{Acknowledgements}

We sincerely thank the two anonymous reviewers for their careful reading of our manuscript and for their valuable comments and suggestions, which have greatly improved the quality of this paper. This research was supported by National Key R\&D Program of China under grant number 2024YFA1013900, NSFC under grant number 12471327, and the Natural Science Foundation of Hebei Province under grant number A2023205045.

\end{document}